
\documentclass[12pt]{amsart}
\usepackage{amsfonts,latexsym,amsthm,amssymb,graphicx,mathdots}

\usepackage{hyperref}

\setlength{\textwidth}{6 in}
\setlength{\textheight}{9.5 in}
\setlength{\topmargin}{-0.25in}
\setlength{\oddsidemargin}{0in}
\setlength{\evensidemargin}{0in}

\DeclareFontFamily{OT1}{rsfs}{}
\DeclareFontShape{OT1}{rsfs}{n}{it}{<-> rsfs10}{}
\DeclareMathAlphabet{\mathscr}{OT1}{rsfs}{n}{it}

\newtheorem{theorem}{Theorem}[section]
\newtheorem{lemma}[theorem]{Lemma}
\newtheorem{corol}[theorem]{Corollary}
\newtheorem{prop}[theorem]{Proposition}

{\theoremstyle{definition} }
{\theoremstyle{remark} \newtheorem{remark}[theorem]{Remark}
\newtheorem{example}[theorem]{Example}}

\numberwithin{equation}{section}

\newcommand{\Abb}{{\mathbb{A}}}
\newcommand{\Pbb}{{\mathbb{P}}}
\newcommand{\Tbb}{{\mathbb{T}}}
\newcommand{\Zbb}{{\mathbb{Z}}}
\newcommand{\one}{1\hskip-3.5pt1}
\newcommand{\cD}{{\mathscr{D}}}
\newcommand{\cJ}{{\mathcal{J}}}
\newcommand{\cO}{{\mathcal{O}}}
\newcommand{\csm}{{c_{\text{SM}}}}

\newcommand{\cma}{{c_{\text{Ma}}}}
\newcommand{\cmam}{{c^-_{\text{Ma}}}}
\newcommand{\uH}{{\underline{H}}}
\newcommand{\qede}{\hfill $\lrcorner$}

\DeclareMathOperator{\Eu}{Eu}
\DeclareMathOperator{\Sing}{Sing}
\DeclareMathOperator{\defe}{def}
\DeclareMathOperator{\codim}{codim}
\DeclareMathOperator{\Id}{Id}


\title{
Projective duality and a Chern-Mather involution
}
\author{Paolo Aluffi}
\address{
Mathematics Department, 
Florida State University,
Tallahassee FL 32306, U.S.A.
}
\email{aluffi@math.fsu.edu}

\begin{document}

\begin{abstract}
We observe that linear relations among Chern-Mather classes of
projective varieties are preserved by projective duality. We deduce
the existence of an explicit involution on a part of the Chow group of
projective space, encoding the effect of duality on Chern-Mather
classes. Applications include Pl\"ucker formulae, constraints on
self-dual varieties, generalizations to singular varieties of
classical formulas for the degree of the dual and the dual defect,
formulas for the Euclidean distance degree, and computations of
Chern-Mather classes and local Euler obstructions for cones.
\end{abstract}

\maketitle


\section{Introduction}\label{sec:intro}

The conormal space of a projective variety $V\subseteq \Pbb^n$ is
intimately related with both the dual variety $V^\vee\subseteq
\Pbb^{n\vee}$ and intersection-theoretic invariants of $V$,
specifically the Chern-Mather class of~$V$. Not surprisingly, formulas
for Chern-Mather classes arise in the study of dual varieties, and
invariants associated with the latter may be expressed in terms of the
former. The main result of this paper is a very direct expression of
this kinship: we will prove that the (push-forward to projective space
of the) Chern-Mather class of $V^\vee$ may be obtained by applying an
explicit involution to the Chern-Mather class of $V$. While
straightforward, this fact carries a remarkable amount of information:
we will present applications to Pl\"ucker formulae, self-dual
varieties, formulas for the dual defect of a variety, computations of
Chern-Mather classes and local Euler obstructions for cones, and more.

The main result of this note admits several equivalent
formulations. Let $V\subseteq \Pbb^n$ be a projective variety over an
algebraically closed field of characteristic~$0$. 
The Chern-Mather class of $V$, introduced by
R.~MacPherson (\cite{MR0361141}), is a generalization to arbitrary
varieties of the notion of total Chern class of the tangent bundle of
a nonsingular variety. This note will be concerned with the
push-forward to $\Pbb^n$ of the Chern-Mather class; we denote this
push-forward by $\cma(V)$, and let $\cmam(V)=(-1)^{\dim V}
\cma(V)$. The first formulation expresses a basic compatibility
property of this signed Chern-Mather class under projective duality.

\begin{theorem}\label{thm:main}
Let $V_1,\dots, V_r\subsetneq \Pbb^n$ be proper closed subvarieties
and $a_1,\dots, a_r$ integers such that
\[
\sum_{k=1}^r a_k\, \cmam(V_k)=0
\]
in $A_*\Pbb^n$. Then
\[
\sum_{k=1}^r a_k\, \cmam(V_k^\vee)=0
\]
in $A_*\Pbb^{n\vee}$.
\end{theorem}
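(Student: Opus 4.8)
The plan is to route everything through the conormal variety. For a proper subvariety $V\subsetneq\Pbb^n$ write $\operatorname{Con}(V)\subseteq\Pbb^n\times\Pbb^{n\vee}$ for the closure of the set of pairs $(p,H)$ with $p$ a smooth point of $V$ and $H$ a hyperplane containing the embedded tangent space $T_pV$; it has pure dimension $n-1$, it maps onto $V$ under the first projection and onto $V^\vee$ under the second. The first input is \emph{biduality}: under the tautological isomorphism $\tau\colon\Pbb^n\times\Pbb^{n\vee}\xrightarrow{\ \sim\ }\Pbb^{n\vee}\times\Pbb^n$ interchanging the factors, $\operatorname{Con}(V)$ is carried to $\operatorname{Con}(V^\vee)$ (in particular $(V^\vee)^\vee=V$), so that $\tau_*[\operatorname{Con}(V)]=[\operatorname{Con}(V^\vee)]$ in $A_{n-1}(\Pbb^{n\vee}\times\Pbb^n)$. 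No signs enter here, since the conormal variety is reduced and $\tau$ is an honest isomorphism.

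The second, and crucial, input is a \emph{conormal--Chern-Mather dictionary}: the class $[\operatorname{Con}(V)]\in A_{n-1}(\Pbb^n\times\Pbb^{n\vee})$ and the push-forward Chern-Mather class $\cma(V)\in A_*\Pbb^n$ determine one another through an invertible $\Zbb$-linear map $\Phi_n$ depending only on $n$ — triangular in the natural monomial bases $\{H^a\check H^{\,n+1-a}\}_{a=1}^{n}$ of $A_{n-1}(\Pbb^n\times\Pbb^{n\vee})$ against the standard basis of $A_{\le n-1}\Pbb^n$ — with
\[
\Phi_n\big(\cmam(V)\big)=[\operatorname{Con}(V)]
\]
for every proper $V\subsetneq\Pbb^n$. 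This is the Chern-Mather refinement of the classical relation between the polar ranks of $V$ and its Chern classes (Piene, in the nonsingular case), which in general follows from MacPherson's characteristic-cycle description of the Chern-Mather class; the sign $(-1)^{\dim V}$ built into $\cmam$ is precisely what makes $\Phi_n$ uniform across subvarieties of all dimensions. I expect establishing this fact \emph{in its sharp form} — a single invertible linear dictionary, valid simultaneously for every dimension, with all signs reconciled — to be the main obstacle; everything else is formal.

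Granting the two inputs, the theorem is a one-line diagram chase. Assume $\sum_{k}a_k\,\cmam(V_k)=0$ in $A_*\Pbb^n$; each summand lies in $A_{\le n-1}\Pbb^n$ since $V_k$ is proper, so $\Phi_n$ applies. Applying the linear map $\Phi_n$ gives $\sum_k a_k\,[\operatorname{Con}(V_k)]=0$ in $A_{n-1}(\Pbb^n\times\Pbb^{n\vee})$. Pushing forward by $\tau$ and using biduality, $\sum_k a_k\,[\operatorname{Con}(V_k^\vee)]=0$ in $A_{n-1}(\Pbb^{n\vee}\times\Pbb^n)$. Since $\Pbb^{n\vee}$ is again a projective $n$-space, the same dictionary $\Phi_n^\vee$ is available there, and applying $(\Phi_n^\vee)^{-1}$ yields $\sum_k a_k\,\cmam(V_k^\vee)=0$ in $A_*\Pbb^{n\vee}$, as claimed. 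The explicit involution advertised in the abstract is then nothing but $(\Phi_n^\vee)^{-1}\circ\tau_*\circ\Phi_n$, read on the subgroup of $A_*\Pbb^n$ spanned by the classes $\cmam(V)$ after identifying $\Pbb^{n\vee}$ with $\Pbb^n$, and Theorem~\ref{thm:main} is exactly the assertion that this operator is well defined on that span.
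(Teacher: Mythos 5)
Your proposal is correct and follows essentially the same route as the paper: biduality of the conormal variety plus an invertible linear dictionary between $[\operatorname{Con}(V)]$ and $\cmam(V)$, followed by the formal transport of the linear relation. The ``dictionary'' you defer as the main obstacle is exactly what the paper supplies, via Sabbah's conormal-cycle formula for the Chern--Mather class (its equation~\eqref{eqn:key1}) together with Proposition~\ref{prop:key}, which inverts that formula by an explicit (unipotent triangular, as you predict) computation in $A_{n-1}$ of the incidence variety $I\cong\Pbb(T^*\Pbb^n)$.
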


This result implies that we can define a `duality' map on rational
equivalence classes $\alpha\in A^*\Pbb^n$ supported in dimension~$<n$:
write $\alpha$ as a linear combination of signed Chern-Mather classes
of proper subvarieties, $\alpha=\sum_i a_i\, \cmam(V_i)$; and then
define $\alpha^\vee$ to be $\sum_i a_i\, \cmam(V_i^\vee)$. This
operation is well-defined by Theorem~\ref{thm:main}, maps $\cmam(V)$
to $\cmam(V^\vee)$, and is uniquely determined by this property.  The
other formulations of the result aim at making this duality map more
explicit and effectively computable.

For the second formulation, let $H$ denote the hyperplane class and
note that the classes
\begin{equation}\label{eqn:cmamp}
\{\Pbb^k\}:=\cmam(\Pbb^k)=(-1)^k (1+H)^{k+1} H^{n-k}\cap
      [\Pbb^n]\quad,\quad k=0,\dots,n
\end{equation}
form an additive basis of $A_*\Pbb^n$, so that every rational
equivalence class may be written as a combination of these classes.

\begin{theorem}\label{thm:basis}
Let $V\subsetneq \Pbb^n$ be a projective variety. If
\[
\cmam(V) = \sum_{i=0}^{n-1} a_i\, \{\Pbb^i\}\quad,
\]
then
\[
\cmam(V^\vee) = \sum_{i=0}^{n-1} a_{n-1-i}\, \{\Pbb^i\}\quad.
\]
\end{theorem}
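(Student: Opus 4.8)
The plan is to derive Theorem~\ref{thm:basis} from Theorem~\ref{thm:main} together with a single explicit computation: the value of the duality map on the basis classes $\{\Pbb^i\}$. First I would record that $\{\Pbb^0\},\dots,\{\Pbb^{n-1}\}$ form a $\Zbb$-basis of the subgroup of $A_*\Pbb^n$ consisting of classes supported in dimension $<n$. Indeed, by~\eqref{eqn:cmamp} the class $\{\Pbb^k\}$ equals $(-1)^k[\Pbb^k]$ plus a $\Zbb$-combination of the $[\Pbb^j]$ with $j<k$, so the transition matrix from $\{[\Pbb^0],\dots,[\Pbb^{n-1}]\}$ to $\{\{\Pbb^0\},\dots,\{\Pbb^{n-1}\}\}$ is unipotent and triangular, hence invertible over $\Zbb$. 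In particular each element of this subgroup is an integer combination of the signed Chern-Mather classes $\cmam(\Pbb^i)=\{\Pbb^i\}$ of the proper linear subspaces $\Pbb^i\subsetneq\Pbb^n$, and Theorem~\ref{thm:main} shows that the assignment $\alpha\mapsto\alpha^\vee$ described after its statement is a well-defined $\Zbb$-linear map on this subgroup sending $\cmam(V)$ to $\cmam(V^\vee)$ for every proper subvariety $V\subsetneq\Pbb^n$.

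Next I would compute $\{\Pbb^i\}^\vee$ for $0\le i\le n-1$. Since $\{\Pbb^i\}=\cmam(\Pbb^i)$ for a linear $\Pbb^i\subseteq\Pbb^n$, the definition of the duality map gives $\{\Pbb^i\}^\vee=\cmam\big((\Pbb^i)^\vee\big)$. The dual of a linear subspace is again linear: the embedded tangent space of $\Pbb^i$ at every point is $\Pbb^i$ itself, so a hyperplane is tangent to $\Pbb^i$ precisely when it contains $\Pbb^i$, and the hyperplanes containing a fixed $i$-plane form a linear $\Pbb^{n-1-i}\subseteq\Pbb^{n\vee}$. Hence $(\Pbb^i)^\vee=\Pbb^{n-1-i}$ and $\{\Pbb^i\}^\vee=\cmam(\Pbb^{n-1-i})=\{\Pbb^{n-1-i}\}$.

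Finally I would conclude by linearity: if $\cmam(V)=\sum_{i=0}^{n-1}a_i\,\{\Pbb^i\}$, then
\[
\cmam(V^\vee)=\cmam(V)^\vee=\sum_{i=0}^{n-1}a_i\,\{\Pbb^i\}^\vee=\sum_{i=0}^{n-1}a_i\,\{\Pbb^{n-1-i}\},
\]
and re-indexing by $j=n-1-i$ yields $\cmam(V^\vee)=\sum_{j=0}^{n-1}a_{n-1-j}\,\{\Pbb^j\}$, as claimed. I do not anticipate a real obstacle here, as the substance is carried entirely by Theorem~\ref{thm:main}; the only points needing care are verifying that the $\{\Pbb^i\}$ with $i<n$ genuinely span the ambient subgroup of $\cmam(V)$ (so that linearity of $\alpha\mapsto\alpha^\vee$ may be applied), and the elementary fact that projective duality interchanges the linear subspaces $\Pbb^i$ and $\Pbb^{n-1-i}$.
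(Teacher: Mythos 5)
Your argument is correct and is essentially the paper's own proof: the paper likewise deduces Theorem~\ref{thm:basis} from Theorem~\ref{thm:main} by writing $\cmam(V)-\sum_i a_i\,\cmam(\Pbb^i)=0$ and using that the dual of a linear $\Pbb^i$ is a linear $\Pbb^{n-1-i}$. Your additional remarks (the unipotent triangular change of basis justifying that the $\{\Pbb^i\}$ span, and the explicit verification that duality interchanges $\Pbb^i$ and $\Pbb^{n-1-i}$) only make explicit what the paper takes as known.
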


The integers $a_i$ may be expressed as certain explicit combinations
of the degrees of the components of $\cma(V)$: $a_i=\sum_{j=i}^{\dim
  V} \binom{j+1}{i+1} (-1)^{\dim V-j} \cma(V)_j$
(see~\S\ref{ss:coefs} and Proposition~\ref{prop:KKH}). In fact, 
$a_i$ equals the $i$-th `rank', or `polar class', of $V$ 
(Remark~\ref{rem:polardegs}). Theorem~\ref{thm:basis} amounts to 
an alternative viewpoint on the very classical theory of these ranks: 
the ranks are interpreted here as the
coefficients of the signed Chern-Mather class in an additive basis for the 
Chow group of the ambient variety (projective space) obtained 
by considering signed Chern-Mather classes of certain distinguished 
subvarieties (linear subspaces). It is tempting to guess that
a similar approach may yield tools analogous to the ranks in more
general settings. Also, the ranks are essentially obtained by applying
to the Chern-Mather class of a variety a change-of-basis matrix 
whose entries are the coefficients of the
(signed) Chern classes of the distinguished subvarieties in terms of their 
fundamental classes. The square of this `Chern matrix' is the identity 
(Proposition~\ref{prop:squid}), a simple but intriguing fact that is likely 
just a facet of a substantially more general result, and which seems (to us) 
less apparent from the classical point of view on ranks.

The third formulation of the main result amounts to an explicit form 
of the duality introduced above.  For a fixed $n$, consider the following
$\Zbb$-linear map defined on polynomials $p(t)\in \Zbb[t]$:
\[
p\mapsto \cJ_n(p), \quad p(t) \mapsto
p(-1-t)-p(-1)\big((1+t)^{n+1}-t^{n+1}\big)\quad.
\]
It is immediately verified that if $\deg p\le n$, then $\deg \cJ_n(p)\le n$; and that $\cJ_n^2(p)=p$ 
for polynomials $p(t)\in (t)$.

\begin{theorem}\label{thm:invo}
Let $V\subsetneq \Pbb^n$ be a projective variety. If
\[
\cmam(V)=q(H)\cap [\Pbb^n]
\]
for a polynomial $q$ of degree $\le n$ in the hyperplane class $H$ in
$\Pbb^n$, then
\[
\cmam(V^\vee)=\cJ_n(q)\cap [\Pbb^{n\vee}]\quad,
\]
where $\cJ_n(q)$ is viewed as a polynomial in the hyperplane class in
$\Pbb^{n\vee}$.
\end{theorem}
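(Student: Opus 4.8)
The plan is to derive Theorem~\ref{thm:invo} from Theorem~\ref{thm:basis} by translating the ``reverse the coefficients'' recipe of the latter into the language of polynomials in the hyperplane class. First I would set
\[
\phi_k(t):=(-1)^k(1+t)^{k+1}t^{n-k}\qquad(k=0,\dots,n),
\]
so that by~\eqref{eqn:cmamp} one has $\{\Pbb^k\}=\phi_k(H)\cap[\Pbb^n]$. Each $\phi_k$ has degree $n+1$ and leading coefficient $(-1)^k$, so writing $\phi_k=\bar\phi_k+(-1)^k t^{n+1}$ with $\deg\bar\phi_k\le n$ one still has $\{\Pbb^k\}=\bar\phi_k(H)\cap[\Pbb^n]$, since $H^{n+1}\cap[\Pbb^n]=0$. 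Since $1,H,\dots,H^n$ are linearly independent in $A_*\Pbb^n$, the assignment $p\mapsto p(H)\cap[\Pbb^n]$ is injective on polynomials of degree $\le n$; in particular the $q$ in the statement is the unique such polynomial with $q(H)\cap[\Pbb^n]=\cmam(V)$.

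Next I would invoke Theorem~\ref{thm:basis}: write $\cmam(V)=\sum_{k=0}^{n-1}a_k\{\Pbb^k\}$ with $a_k\in\Zbb$, so that comparing with $\cmam(V)=q(H)\cap[\Pbb^n]$ and using the injectivity above yields $q=\sum_{k=0}^{n-1}a_k\bar\phi_k$. Theorem~\ref{thm:basis} also gives $\cmam(V^\vee)=\sum_{k=0}^{n-1}a_{n-1-k}\{\Pbb^k\}$, which after reindexing equals $\bigl(\sum_{k=0}^{n-1}a_k\,\bar\phi_{n-1-k}\bigr)(H)\cap[\Pbb^{n\vee}]$. Thus Theorem~\ref{thm:invo} is equivalent to the polynomial identity $\cJ_n\bigl(\sum_k a_k\bar\phi_k\bigr)=\sum_k a_k\bar\phi_{n-1-k}$, and since $\cJ_n$ is $\Zbb$-linear it suffices to prove
\[
\cJ_n(\bar\phi_k)=\bar\phi_{n-1-k}\qquad(k=0,\dots,n-1).
\]

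The key computation is the substitution identity $\phi_k(-1-t)=\phi_{n-1-k}(t)$: replacing $t$ by $-1-t$ turns $(1+t)^{k+1}$ into $(-t)^{k+1}$ and $t^{n-k}$ into $(-1)^{n-k}(1+t)^{n-k}$, and collecting signs (the total sign being $(-1)^{k}(-1)^{k+1}(-1)^{n-k}=(-1)^{n+k+1}=(-1)^{n-1-k}$) gives exactly $(-1)^{n-1-k}(1+t)^{n-k}t^{k+1}=\phi_{n-1-k}(t)$. Moreover $\phi_k(-1)=0$ for $0\le k\le n-1$, because the factor $(1+t)^{k+1}$ vanishes at $t=-1$, so the correction term in the definition of $\cJ_n$ disappears and $\cJ_n(\phi_k)=\phi_k(-1-t)=\phi_{n-1-k}$. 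It then remains to pass to truncations: a one-line computation gives $\cJ_n(t^{n+1})=(-1)^{n+1}t^{n+1}$, hence by linearity $\cJ_n(\phi_k)\equiv\cJ_n(\bar\phi_k)\pmod{t^{n+1}}$; since $\cJ_n(\bar\phi_k)$ has degree $\le n$ by the observation preceding the theorem, it must coincide with the reduction of $\phi_{n-1-k}$ modulo $t^{n+1}$, which is $\bar\phi_{n-1-k}$. This establishes the displayed identity, and hence the theorem.

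I do not expect a genuine obstacle: once Theorem~\ref{thm:basis} is available this is a finite manipulation of binomial coefficients and signs. The one place that wants care is the bookkeeping around truncation --- $\phi_k$ and $\bar\phi_k$ represent the same class in $A_*\Pbb^n$, but $\cJ_n$ acts on honest polynomials, so one must apply it to the degree-$\le n$ representatives and track the $t^{n+1}$ terms, which is precisely the role of the identity $\cJ_n(t^{n+1})=(-1)^{n+1}t^{n+1}$. Alternatively one could check directly that $\cJ_n$ preserves the $\Zbb$-span of $\bar\phi_0,\dots,\bar\phi_{n-1}$ and reverses this ordered basis, but passing through the untruncated $\phi_k$ is what makes the substitution $t\mapsto-1-t$ transparent, and is the route I would write up.
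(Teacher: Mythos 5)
Your proposal is correct and takes essentially the same route as the paper's \S\ref{sec:equiv}: derive Theorem~\ref{thm:invo} from Theorem~\ref{thm:basis} by checking that $\cJ_n$ sends the degree-$\le n$ representative $\pi_k(H)=(-1)^k\bigl((1+H)^{k+1}H^{n-k}-H^{n+1}\bigr)$ of $\{\Pbb^k\}$ (your $\bar\phi_k$) to $\pi_{n-1-k}$. The paper dismisses this identity as ``an immediate consequence of the definition of $\cJ_n$''; your substitution $t\mapsto-1-t$ applied to the untruncated $\phi_k$, together with $\phi_k(-1)=0$ and $\cJ_n(t^{n+1})=(-1)^{n+1}t^{n+1}$, is precisely the verification, carried out correctly.
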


The equivalence of Theorems~\ref{thm:main}---\ref{thm:invo} is
verified in~\S\ref{sec:equiv}; and then Theorem~\ref{thm:main} is
proven in~\S\ref{sec:mainpf}. Corollary~\ref{corol:concla}, proven
along the way, has a direct application to the {\em Euclidean distance
  degree,\/} and this is presented in~\S\ref{ss:ED} as a
generalization of a result from~\cite{eudideg}.

Other applications illustrating the use of the main theorem are
presented in~\S\ref{sec:examples}.  With some exceptions, these
results are standard; our main goal here is to illustrate that the
simple `Chern-Mather involution' defined above leads to very efficient
proofs of these facts, often providing at the same time a
straightforward generalization to the singular
case. In~\S\ref{ss:Pluecker} we recover a general form of the
classical Pl\"ucker formula for plane curves, and give a transparent
derivation of B.~Teissier's generalization of this formula from a
result of R.~Piene on Chern-Mather classes.
(This is essentially a re-packaging of Piene's own proof 
in~\cite{MR1074588}. Also see~\cite[p.~356-7]{MR0568897} for an 
equally transparent derivation of a generalization of this formula.)
In~\S\ref{ss:selfdual} we obtain constraints for a variety to be
self-dual; for example, we show (Proposition~\ref{prop:hypcons}) that
the singular locus of a self-dual hypersurface of degree $d$
in~$\Pbb^n$ has dimension~$\ge \frac{n-3}2$.  This simple statement
may be new---we were not able to find it in the literature.
Proposition~\ref{prop:KKH} in~\S\ref{ss:dude} recovers and generalizes
to singular varieties the Katz-Kleiman-Holme formula for the `dual
defect' and degree of $V^\vee$. The formula we obtain is equivalent
to, but somewhat leaner than, the generalization given
in~\cite{MR2336689} (see Remark~\ref{rem:MT} for a comparison).

In~\S\ref{ss:cones} (Propositions~\ref{prop:conform},
\ref{prop:euler}, and \ref{prop:Piene}) we obtain formulas for the
Chern-Mather class of a cone and for the local Euler obstruction along
the vertex of a cone; these formulas also appear to be new, although
they would likely be straightforward consequences of other known results,
such as the expression of the local Euler obstruction as an alternating
sum of multiplicities of polar varieties from~\cite{MR634426}. For
instance, we prove that if $V$ is a cone over a variety~$W$, then the
local Euler obstruction of $V$ at a point $p$ on the vertex equals
\[
\Eu_V(p)=\sum_{j=0}^{\dim W} (-1)^j \cma(W)_j
\]
where $\cma(W)_j$ denotes the degree of the $j$-dimensional component
of the Chern-Mather class of $W$. Questions of this type do not
involve duality in their formulation, but duality offers an effective
tool to address them, relying directly on the Chern-Mather
involution. The proofs given here are direct and self-contained. \smallskip

An alternative treatment of the connection between the Chern-Mather
classes of a projective variety $V$ and of its dual $V^\vee$
(including the more general case of higher order duals in
Grassmannians) may be found in~\cite{MR1468918} (Theorem 4.5), with
focus on the relation between the corresponding constructible
functions via the {\em topological Radon transform\/} studied by
L.~Ernstr\"om, cf.~\cite{MR1301184}. (We should also mention the formalism
of \cite[\S9.7]{MR95g:58222}, particularly the `Fourier transform' of p.~403
and ff.) The approach taken in this
note is different: here we decompose the Chern-Mather class of a
variety as a linear combination of Chern-Mather classes of {\em a
  priori\/} unrelated varieties, such as linear subspaces; no such
decomposition holds at the level of constructible functions. Working
directly at the level of classes results in a coarser theory, but
affords a greater level of flexibility, which plays a key role in the
applications presented in~\S\ref{sec:examples}. The involution
introduced here is also a Radon-type transformation, but it is
different from either the `homological Radon transformation' or the
`homological Verdier Radon transformation'
of~\cite[\S3]{MR1468918}. It would be interesting to study precise
relationships between these notions.
\smallskip

{\em Acknowledgments.} 
The author thanks Giorgio Ottaviani for conversations that led him to
reconsider Chern-Mather classes; the applications to the Euclidean
distance degree given in~\S\ref{sec:proof} and~\S\ref{sec:examples}
are a direct offspring of those conversations. The author also thanks
Steven Kleiman and Ragni Piene for very useful comments on a 
previous version of this note.

The author's research is supported in part by the Simons foundation
and by NSA grant H98230-15-1-0027. The author is grateful to Caltech
for hospitality while this work was carried out.


\section{Proofs}\label{sec:proof}

\subsection{Theorems~\ref{thm:main}---\ref{thm:invo} are 
equivalent}\label{sec:equiv}
The fact that Theorem~\ref{thm:main} implies Theorem~\ref{thm:basis}
follows immediately from the fact that the dual of a dimension $i$
linear subspace of $\Pbb^n$ is a dimension $n-1-i$ linear subspace of
$\Pbb^{n\vee}$: if $\cmam(V)=\sum_{i=0}^{n-1} a_i\{\Pbb^i\}$, then
$\cmam(V)-\sum_{i=0}^{n-1}a_i \cmam(\Pbb^i)=0$, so $\cmam(V^\vee)
-\sum_{i=0}^{n-1}a_i \cmam(\Pbb^{n-1-i})=0$ by Theorem~\ref{thm:main},
and Theorem~\ref{thm:basis} follows.

To see that Theorem~\ref{thm:basis} implies Theorem~\ref{thm:invo},
observe that $ \{\Pbb^k\} = \pi_k(H)\cap [\Pbb^n] $ with
$\pi_k(H)=(-1)^k\left((1+H)^{k+1} H^{n-k}-H^{n+1}\right)$, a
polynomial in $H$ of degree $\le n$. If
$
\cmam(V) = q(H)\cap [\Pbb^n]
$
as in the statement of Theorem~\ref{thm:invo}, and $\cmam(V)=\sum_i
a_i \{\Pbb^i\}$, then $q(H) = \sum_i a_i \pi_i(H)$. Therefore
$\cJ_n(q)=\sum_i a_i \cJ_n(\pi_i)$ by linearity, while by
Theorem~\ref{thm:basis}
\[
\cmam(V^\vee) = \sum_i a_i \{\Pbb^{n-i-1}\}\quad.
\]
It suffices then to observe that $\cJ_n(\pi_k)\cap
[\Pbb^n]=\{\Pbb^{n-k-1}\}$, that is,
\[
\cJ_n\left((-1)^k\left((1+H)^{k+1} H^{n-k}-H^{n+1} \right)\right) 
= (-1)^{n-k-1}\left((1+H)^{n-k} H^{k-1}-H^{n+1}\right)\quad,
\]
and this is an immediate consequence of the definition of $\cJ_n$.

Finally, to verify that Theorem~\ref{thm:invo} implies
Theorem~\ref{thm:main}, assume as in the statement of
Theorem~\ref{thm:main} that $\sum_{k=1}^r a_k\, \cmam(V_k)=0$. If
$q_k\in (H)\subseteq \Abb_*(\Pbb^n)$ are the polynomials of degree
$\le n$ corresponding to the classes $\cmam(V_k)$, we have
\[
\sum_{k=1}^r a_k\,\cmam(V_k)=0 \implies \sum_{k=1}^r a_k\,q_k=0 \implies
\sum_{k=1}^r a_k\,\cJ_n(q_k)=0
\]
by linearity, and this implies $\sum_{k=1}^r a_k\,\cmam(V_k^\vee) = 0$
according to Theorem~\ref{thm:invo}, yielding Theorem~\ref{thm:main}.

\subsection{Proof of Theorem~\ref{thm:main}}\label{sec:mainpf}
We work over an algebraically closed field of characteristic~$0$. This
restriction on the characteristic is commonly adopted in dealing with
characteristic classes of singular varieties, and it is further needed
for Lemma~\ref{lem:1524} below.

The (total) {\em Chern-Mather\/} class of $V$ is defined by R.~MacPherson 
as the push-forward of the Chern class of the tautological bundle of the
Nash blow-up of $V$, \cite[\S2]{MR0361141}. By construction the tautological
bundle restricts to the pull-back of the tangent bundle over the nonsingular
part $V_\text{reg}$ of $V$, and it follows that the Chern-Mather class restricts 
to $c(TV_{\text{reg}})\cap [V_{\text{reg}}]$ on $V_{\text{reg}}$. In particular, the
Chern-Mather class agrees with $c(TV)\cap[V]$ if $V$ is nonsingular.
MacPherson's theory of Chern
classes for singular varieties is based on a natural trasformation
defined by associating the Chern-Mather class with the {\em local
  Euler obstruction,\/} cf.~\cite[\S3]{MR0361141}. (The local Euler obstruction
was independently introduced and studied by M.~Kashiwara in the
context of index theorems for holonomic $\cD$-modules, \cite{MR0368085}.)

We will use an alternative formulation, given by C.~Sabbah in terms of
the {\em conormal variety,\/} \cite[\S1.2.1]{MR804052}. For $V$ a
proper closed subvariety of $\Pbb^n$, we denote by
$\Pbb(C^*_V\Pbb^n)\subseteq \Pbb(T^*\Pbb^n)$ the projective conormal
variety of $V$, obtained as the Zariski closure of the
projectivization of the conormal bundle of the nonsingular part
$V_\text{red}$ of~$V$. Letting $g:\Pbb(C^*_V\Pbb^n)\to V\hookrightarrow
\Pbb^n$ be the projection followed by the inclusion in~$\Pbb^n$, the
Chern-Mather class of~$V$ pushes forward to the class
\[
\cma(V)=(-1)^{n-1-\dim V} c(T\Pbb^n)\cap g_*\left( c(\cO(1))^{-1}\cap
    [\Pbb (C^*_V\Pbb^n)]\right)
\]
in $\Abb_*\Pbb^n$ (cf.~\cite[\S1]{MR2002g:14005}). Here, $\cO(1)$ is
the restriction of the universal quotient bundle over $\Pbb(T^*\Pbb^n)$. 
If $V\overset \iota\hookrightarrow \Pbb^n$ is a {\em nonsingular\/}
variety, then $\cma(V)=\iota_* (c(TV)\cap [V])$.

As in~\S\ref{sec:intro} we introduce a sign according to the dimension
of $V$ and let
\[
\cmam(V)=(-1)^{n-1} c(T\Pbb^n)\cap g_*\left( c(\cO(1))^{-1}\cap [\Pbb
  (C^*_V\Pbb^n)]\right) \in\Abb_*\Pbb^n\quad.
\]
Now $\Pbb(T^*\Pbb^n)$ may be realized as the incidence correspondence
$I\subseteq \Pbb^n\times \Pbb^{n\vee}$, a divisor of class $H+h$
(denoting by $H$, resp., $h$ the hyperplane classes in $\Pbb^n$,
resp.,~$\Pbb^{n\vee}$, and their pull-backs).  It is easy to verify
that in this realization the universal bundle $\cO(1)$ agrees with the
normal bundle to $I$ in $\Pbb^n\times \Pbb^{n\vee}$. Further, the
projective conormal variety $\Pbb(C^*_V\Pbb^n)$ may then be identified
with the closure of the correspondence
\[
\Phi_V:=\overline{\{(p,H)\, |\, \text{$p\in V_\text{reg}$ and
    $H\supseteq \Tbb_p V$}\}}\quad.
\]
Therefore we may write
\begin{equation}\label{eqn:key1}
\cmam(V)=(-1)^{n-1} (1+H)^{n+1}\cap \pi_{1*}\left(\frac 1{1+H+h}\cap
     [\Phi_V]\right)
\end{equation}
where $\pi_{1*}$ is the restriction of the projection onto the first
factor $\Pbb^n\times \Pbb^{n\vee}\to \Pbb^n$.

Switching factors gives the canonical isomorphism
\begin{equation*}
\tag{*} \Pbb^n \times \Pbb^{n\vee} \cong \Pbb^{n\vee}\times \Pbb^n
\cong \Pbb^{n\vee}\times \Pbb^{n\vee \vee}\quad,
\end{equation*}
and we recall the following classical result, which holds under our 
blanket characteristic~$0$ hypothesis. (In fact, this reflexivity holds
as soon as the projection $\Phi_V \to V^\vee$ is generically smooth,
cf.~\cite[p.~169]{MR846021}.)

\begin{lemma}\label{lem:1524}
Via the identification (*), $\Phi_V=\Phi_{V^\vee}$.
\end{lemma}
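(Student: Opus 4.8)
The plan is to prove this classical biduality statement directly in the affine cone picture, where the asserted symmetry between $\Phi_V$ and $\Phi_{V^\vee}$ reduces to the fact that a conormal variety is isotropic — hence, by dimension, Lagrangian — for the natural pairing, a condition that is visibly symmetric under exchanging the two factors. Write $\hat V\subseteq\Abb^{n+1}$ for the affine cone over $V$ and $\hat V^\vee\subseteq(\Abb^{n+1})^\vee$ for the affine cone over $V^\vee$, and let $\widehat{\Phi_V}:=\overline{\{(x,\xi): x\in\hat V_{\mathrm{reg}},\ \xi|_{T_x\hat V}=0\}}\subseteq\Abb^{n+1}\times(\Abb^{n+1})^\vee$ be the affine conormal variety, so that the image of $\widehat{\Phi_V}$ in $\Pbb^n\times\Pbb^{n\vee}$ is $\Phi_V$ and, by the very definition of the dual variety, the closure of $\pi_2(\widehat{\Phi_V})$ is $\hat V^\vee$. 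Using $(\Abb^{n+1})^{\vee\vee}=\Abb^{n+1}$, the identification (*) is induced by the swap $\sigma:\Abb^{n+1}\times(\Abb^{n+1})^\vee\to(\Abb^{n+1})^\vee\times\Abb^{n+1}$, and the Lemma is exactly the projectivization of the claim $\sigma(\widehat{\Phi_V})=\widehat{\Phi_{V^\vee}}$.

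First I would reduce to an inclusion. Both $\widehat{\Phi_V}$ and $\widehat{\Phi_{V^\vee}}$ are irreducible of dimension $n+1$ — each is the closure of a torus bundle over the conormal bundle of the smooth locus of an irreducible cone — so $\sigma(\widehat{\Phi_V})$ is irreducible of dimension $n+1$, and it suffices to prove $\sigma(\widehat{\Phi_V})\subseteq\widehat{\Phi_{V^\vee}}$, since an inclusion of irreducible varieties of the same dimension is an equality. Because $\widehat{\Phi_{V^\vee}}$ is closed, it is in turn enough to check this containment over a dense open subset of $\widehat{\Phi_V}$.

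The heart of the argument is a tangent-space computation at a general point $(x_0,\xi_0)\in\widehat{\Phi_V}$, which I would carry out as follows. For $(x_0,\xi_0)$ general we have $x_0\in\hat V_{\mathrm{reg}}$, $\xi_0\in(T_{x_0}\hat V)^\perp$, the point $(x_0,\xi_0)$ is smooth on $\widehat{\Phi_V}$, and — this is the one place characteristic $0$ is used, and it is precisely the hypothesis flagged after the statement — by generic smoothness the projection $\pi_2:\widehat{\Phi_V}\to\hat V^\vee$ has differential at $(x_0,\xi_0)$ surjecting onto $T_{\xi_0}\hat V^\vee$, with $\xi_0$ a smooth point of $\hat V^\vee$. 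Given $(\dot x,\dot\xi)\in T_{(x_0,\xi_0)}\widehat{\Phi_V}$, represented by a curve $(x(t),\xi(t))$ in $\widehat{\Phi_V}$ with $x(t)\in\hat V_{\mathrm{reg}}$, the Euler relation for the cone $\hat V$ gives $x(t)\in T_{x(t)}\hat V$, whence $\xi(t)(x(t))\equiv 0$; differentiating at $t=0$ yields $\dot\xi(x_0)+\xi_0(\dot x)=0$, and since $\dot x\in T_{x_0}\hat V$ while $\xi_0\perp T_{x_0}\hat V$ the second summand vanishes, so $\dot\xi(x_0)=0$. Hence $T_{\xi_0}\hat V^\vee=d\pi_2\bigl(T_{(x_0,\xi_0)}\widehat{\Phi_V}\bigr)$ is contained in $\{\eta\in(\Abb^{n+1})^\vee:\eta(x_0)=0\}$, i.e.\ $x_0\perp T_{\xi_0}\hat V^\vee$; as $\xi_0$ is a smooth point of $\hat V^\vee$, this says precisely that $(\xi_0,x_0)\in\widehat{\Phi_{V^\vee}}$. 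Letting $(x_0,\xi_0)$ range over a dense open set gives $\sigma(\widehat{\Phi_V})\subseteq\widehat{\Phi_{V^\vee}}$, hence equality by the dimension count, and projectivizing both factors yields $\Phi_V=\Phi_{V^\vee}$ under (*).

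The only genuine obstacle is the appeal to generic smoothness of $\pi_2:\widehat{\Phi_V}\to\hat V^\vee$: in positive characteristic this morphism can be inseparable, the differential need not be surjective, and the conclusion itself can fail. Everything else is formal — the Euler relation merely records that $\hat V$ is a cone, and the vanishing $\dot\xi(x_0)=0$ is nothing but the statement that $\widehat{\Phi_V}$ is isotropic for the canonical symplectic pairing on $\Abb^{n+1}\times(\Abb^{n+1})^\vee$, a condition manifestly preserved by $\sigma$.
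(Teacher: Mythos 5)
Your argument is correct, and it is worth noting that the paper does not actually prove this lemma: its ``proof'' is a pointer to the literature (Kleiman's \emph{Tangency and duality} and Theorem~1.7 of Tevelev's book), reflexivity being a classical fact the author does not wish to reprove. What you have written is essentially a self-contained rendition of the standard argument found in those references: pass to affine cones, note that $\widehat{\Phi_V}$ and $\widehat{\Phi_{V^\vee}}$ are irreducible of the middle dimension $n+1$, use the Euler relation $x\in T_x\hat V$ to get $\xi(t)(x(t))\equiv 0$ and hence $\dot\xi(x_0)=0$ (i.e.\ isotropy of the conormal variety for the canonical pairing, a condition visibly symmetric under the swap $\sigma$), and invoke generic smoothness of $\pi_2:\widehat{\Phi_V}\to\hat V^{\vee}$ to convert the tangent-space inclusion into membership in $\widehat{\Phi_{V^\vee}}$. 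You correctly isolate generic smoothness as the sole characteristic-$0$ input, which matches the paper's own remark that reflexivity holds exactly when $\Phi_V\to V^\vee$ is generically smooth. Two cosmetic points, neither affecting validity: the parenthetical ``closure of a torus bundle over the conormal bundle of the smooth locus'' is garbled --- $\widehat{\Phi_V}$ \emph{is} the closure of the conormal bundle of $\hat V_{\mathrm{reg}}$, a rank-$(n-\dim V)$ vector bundle over a $(\dim V+1)$-dimensional base, whence $\dim=n+1$ (equivalently, away from the zero sections it is a $\mathbb{G}_m\times\mathbb{G}_m$-bundle over the $(n-1)$-dimensional $\Phi_V$); and ``represented by a curve'' should be understood as a formal arc through the smooth point $(x_0,\xi_0)$, or replaced by a first-order computation with dual numbers, to make the differentiation legitimate over an arbitrary algebraically closed field of characteristic~$0$. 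Indeed, one can even skip the regularity of $x(t)$ along the arc: the regular function $(x,\xi)\mapsto\xi(x)$ vanishes on a dense open subset of $\widehat{\Phi_V}$, hence identically, and differentiating that identity gives the same conclusion.
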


\begin{proof}
See \cite{MR846021} or e.g.,~\cite[Theorem 1.7]{MR2113135}. 
\end{proof}

Let then $\Phi:=\Phi_V=\Phi_{V^\vee}$. The class of $\Phi$ in
$\Pbb^n\times \Pbb^{n\vee}$ determines $\cmam(V)$ by \eqref{eqn:key1},
and $\cmam(V^\vee)$ by the same token:
\begin{equation}\label{eqn:key2}
\cmam(V^\vee)=(-1)^{n-1} (1+h)^{n+1}\cap \pi_{2*}\left(\frac
1{1+H+h}\cap [\Phi]\right)
\end{equation}
where $\pi_2$ is the restriction of the projection onto the second
factor $\Pbb^n\times \Pbb^{n\vee}\to \Pbb^{n\vee}$.  In order to prove
Theorem~\ref{thm:main} it suffices to prove that, conversely, $[\Phi]$
is determined by the class $\cmam(V)$ and depends linearly on this
class.

This is a general fact, with no direct relation to Chern-Mather classes.

\begin{prop}\label{prop:key}
Let $\gamma=\sum_{i=0}^{n-1} \gamma_i\, [\Pbb^i]$ be a class in
$A_*\Pbb^n$. Then the unique class $\Gamma$ in $A_{n-1}I$ such that
\[
\gamma=(-1)^{n-1} (1+H)^{n+1}\cap \pi_{1*}\left(\frac 1{1+H+h}\cap
\Gamma\right)
\]
is
\[
\Gamma=\sum_{i=0}^{n-1} (-1)^i \gamma_i\, (H+h)^i H^{n-i}\cap [I]\quad.
\]
\end{prop}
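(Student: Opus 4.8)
The plan is to use the fact that the incidence correspondence $I\subseteq\Pbb^n\times\Pbb^{n\vee}$ is a $\Pbb^{n-1}$-bundle over $\Pbb^n$ via the first projection $\pi_1$, and to pin down $A_{n-1}I$ explicitly by the projective bundle formula. The fiber of $\pi_1$ over a point $p$ is the linear subspace $\{H\,|\,p\in H\}\cong\Pbb^{n-1}\subseteq\Pbb^{n\vee}$, on which $H$ restricts to $0$ and $h$ to the hyperplane class; since the normal bundle of $I$ in $\Pbb^n\times\Pbb^{n\vee}$ has class $H+h$, the class $\zeta:=(H+h)|_I$ restricts to the relative hyperplane class on each fiber. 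Hence $1,\zeta,\dots,\zeta^{n-1}$ is an $A^*\Pbb^n$-module basis of $A^*I$, and consequently $\{(H+h)^iH^{n-i}\cap[I]\,|\,i=0,\dots,n-1\}$ is a $\Zbb$-basis of $A_{n-1}I$. Writing $\Gamma_i:=(H+h)^iH^{n-i}\cap[I]$, this shows in particular that the asserted $\Gamma$ is the generic element of $A_{n-1}I$, so that once we compute the operator on each $\Gamma_i$ and find the images linearly independent, both existence and uniqueness will follow.

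Next I would compute the image of $\Gamma_i$ under $\Gamma\mapsto(-1)^{n-1}(1+H)^{n+1}\cap\pi_{1*}\big(\tfrac1{1+H+h}\cap\Gamma\big)$. Factoring $\pi_1=pr_1\circ\iota$ with $\iota:I\hookrightarrow\Pbb^n\times\Pbb^{n\vee}$ and using the projection formula together with $\iota_*[I]=(H+h)\cap[\Pbb^n\times\Pbb^{n\vee}]$, the computation reduces to extracting the coefficient of $h^n$ in $\tfrac{(H+h)^{i+1}H^{n-i}}{1+H+h}$ (that being what $pr_{1*}$ records) and then capping with $(-1)^{n-1}(1+H)^{n+1}$. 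Expanding $\tfrac1{1+H+h}=\sum_{m\ge0}(-1)^m(H+h)^m$ and using the binomial theorem, the $h^n$-coefficient comes out to $(-1)^{n-i-1}\sum_{s=0}^i(-1)^s\binom{n+s}{n}H^{n-i+s}$, so that the image of $\Gamma_i$ equals $(-1)^i(1+H)^{n+1}\big(\sum_{s=0}^i(-1)^s\binom{n+s}{n}H^s\big)H^{n-i}\cap[\Pbb^n]$ in $A_*\Pbb^n$ (the two sign factors combining to $(-1)^i$).

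The observation that closes the argument is that $\sum_{s\ge0}(-1)^s\binom{n+s}{n}H^s$ is precisely the expansion of $(1+H)^{-(n+1)}$; hence $(1+H)^{n+1}\sum_{s=0}^i(-1)^s\binom{n+s}{n}H^s\equiv1\pmod{H^{i+1}}$, and after multiplying by $H^{n-i}$ this is just $H^{n-i}$ in $A^*\Pbb^n$. Therefore the operator sends $\Gamma_i$ to $(-1)^i[\Pbb^i]$, so it restricts on $A_{n-1}I$ to the isomorphism $\Gamma_i\mapsto(-1)^i[\Pbb^i]$ onto $\bigoplus_{i=0}^{n-1}\Zbb[\Pbb^i]$. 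Inverting it, the unique $\Gamma$ with $\gamma=\sum_i\gamma_i[\Pbb^i]$ as image is $\Gamma=\sum_i(-1)^i\gamma_i\,(H+h)^iH^{n-i}\cap[I]$, as claimed.

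The step requiring the most care is the identification of $I$ as a projective bundle with $(H+h)|_I$ serving as the relative hyperplane class, since that is what underwrites the description of $A_{n-1}I$ and hence the uniqueness clause; the computational core — the $h^n$-extraction via the geometric-series expansion and the recognition of $(1+H)^{-(n+1)}$ — is then a brief manipulation. An alternative to the projective-bundle input would be to prove injectivity of the operator on $A_{n-1}I$ directly, but the bundle description seems the cleanest route and it is also what makes the coefficients $(-1)^i$ transparent.
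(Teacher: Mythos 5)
Your proof is correct and takes essentially the same route as the paper's: both decompose $A_{n-1}I$ via the projective bundle theorem with $(H+h)|_I$ as the relative hyperplane class (the paper cites Fulton, Theorem 3.3(b)), push the basis elements forward to $\Pbb^n\times\Pbb^{n\vee}$ using $[I]=(H+h)\cap[\Pbb^n\times\Pbb^{n\vee}]$, and extract the coefficient of $h^n$, recognizing the series $(1+H)^{-(n+1)}$ to conclude that $(H+h)^iH^{n-i}\cap[I]$ maps to $(-1)^i[\Pbb^i]$. The only cosmetic difference is that the paper records the $h^n$-coefficient in the closed form $(-1)^{n-i-1}H^{n-i}/(1+H)^{n+1}$ rather than as your truncated binomial sum.
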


\begin{proof}
Since $I\cong \Pbb T^*\Pbb^n$, every class $\Gamma\in A_{n-1} I$ may
be written uniquely as
\[
\Gamma=\sum_{i=0}^{n-1} a_i\, c_1(\cO(1))^i\cdot H^{n-i} \cap [I]\quad,
\]
by \cite[Theorem~3.3 (b)]{85k:14004}. Therefore, as a class in
$\Pbb^n\times \Pbb^{n\vee}$,
\[
\Gamma=\sum_{i=0}^{n-1} a_i\, (H+h)^{i+1}\cdot H^{n-i}\cap
           [\Pbb^n\times \Pbb^{n\vee}]\quad.
\]
It follows that the push-forward
\[
\pi_{1*}\left(\frac 1{1+H+h}\cap \Gamma\right)
\]
is the coefficient of $h^n$ in the series expansion of
\[
\sum_{i=0}^{n-1} a_i\, \frac{(H+h)^{i+1}\cdot H^{n-i}}{1+H+h}
\]
and this is easily computed to be
$
\sum_{i=0}^{n-1} a_i\, \frac{(-1)^{n-i-1} H^{n-i}}{(1+H)^{n+1}}\quad.
$
Therefore
\[
(-1)^{n-1} (1+H)^{n+1}\cap \pi_{1*}\left(\frac 1{1+H+h}\cap \Gamma\right)
=\sum_{i=0}^{n-1} (-1)^i a_i\, H^{n-i}\quad.
\]
Equating this class with $\gamma$ gives the statement. 
\end{proof}

Theorem~\ref{thm:main} is an easy consequence of
Proposition~\ref{prop:key}.

\begin{proof}[Proof of Theorem~\ref{thm:main}]
Assume $\sum_{k=1}^r a_k\,\cmam(V_k)=0$. By
Proposition~\ref{prop:key},
\[
\sum_{k=1}^r a_k\,[\Phi_k]=0
\]
where $\Phi_k=\Phi_{V_k}=\Phi_{V_k^\vee}$.  By~\eqref{eqn:key2} this
implies that $\sum_{k=1}^r a_k\,\cmam(V_k^\vee)=0$,
completing the proof.
\end{proof}

The explicit relation between the Chern-Mather class of $V$ and the
conormal cycle $\Phi=\Phi_V$ is the following.

\begin{corol}\label{corol:concla}
Let $V$ be a proper closed subvariety of $\Pbb^n$. Then the class of
the conormal variety of $V$ in $\Pbb^n\times \Pbb^{n\vee}$ is given by
\begin{equation}\label{eqn:concla}
[\Phi_V] = \sum_{j=0}^{n-1} (-1)^{\dim V+j}\cma(V)_j\, (H+h)^{j+1}
H^{n-j} \cap [\Pbb^n\times \Pbb^{n\vee}]
\end{equation}
where $\cma(V)_j$ denotes the degree of the $j$-dimensional component
of the Chern-Mather class of $V$ in $\Pbb^n$:
$\cma(V)=\sum_{j=1}^{n-1} \cma(V)_j\, [\Pbb^j]$.
\end{corol}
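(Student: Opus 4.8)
The plan is to read off Corollary~\ref{corol:concla} directly from Proposition~\ref{prop:key} by feeding it the appropriate input. First I would write the signed Chern-Mather class in the standard basis: since $\cma(V)=\sum_j \cma(V)_j\,[\Pbb^j]$ and $\cmam(V)=(-1)^{\dim V}\cma(V)$, we have $\cmam(V)=\sum_j (-1)^{\dim V}\cma(V)_j\,[\Pbb^j]$, so in the notation of Proposition~\ref{prop:key} the coefficients are $\gamma_i=(-1)^{\dim V}\cma(V)_i$. Plugging these into the formula for $\Gamma$ in that Proposition immediately gives
\[
[\Phi_V]=\Gamma=\sum_{i=0}^{n-1}(-1)^i(-1)^{\dim V}\cma(V)_i\,(H+h)^i H^{n-i}\cap[I]\quad.
\]

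The only remaining discrepancy between this and the stated formula \eqref{eqn:concla} is bookkeeping about how a class on $I$ is expressed as a class on $\Pbb^n\times\Pbb^{n\vee}$. Here I would invoke the same identification used inside the proof of Proposition~\ref{prop:key}: $I$ is the divisor of class $H+h$ in $\Pbb^n\times\Pbb^{n\vee}$, so for any class $\beta$, $\beta\cap[I]=\beta\cdot(H+h)\cap[\Pbb^n\times\Pbb^{n\vee}]$. Applying this with $\beta=(H+h)^i H^{n-i}$ turns each term into $(-1)^{\dim V+i}\cma(V)_i\,(H+h)^{i+1}H^{n-i}\cap[\Pbb^n\times\Pbb^{n\vee}]$, which is exactly the summand in \eqref{eqn:concla} after relabeling $i$ as $j$. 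That completes the derivation.

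I do not anticipate a genuine obstacle here; the statement is essentially a restatement of Proposition~\ref{prop:key} specialized to $\gamma=\cmam(V)$, combined with \eqref{eqn:key1}, which is precisely the hypothesis of that Proposition with $\Gamma=[\Phi_V]$. The one point requiring a little care is the sign $(-1)^{\dim V+j}$ versus $(-1)^j$: one must not forget the sign built into $\cmam$ versus $\cma$, and must track that it is the dimension of $V$ (not of the component) that appears. A secondary point worth a sentence is the range of summation: the component $\cma(V)_j$ vanishes for $j>\dim V$ and for $j=n$ (as $V$ is a proper subvariety), so writing the sum up to $n-1$ is harmless and matches the indexing in Proposition~\ref{prop:key}.
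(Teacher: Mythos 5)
Your proposal is correct and is exactly the paper's argument: the paper's proof of this corollary consists of the single observation that it follows from \eqref{eqn:key1} and Proposition~\ref{prop:key} after adjusting the sign for the difference between $\cma(V)$ and $\cmam(V)$. You have simply spelled out the two bookkeeping steps the paper leaves implicit (the sign $(-1)^{\dim V}$ and the passage from $\cap[I]$ to $\cap[\Pbb^n\times\Pbb^{n\vee}]$ via $[I]=(H+h)\cap[\Pbb^n\times\Pbb^{n\vee}]$), both of which you handle correctly.
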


\begin{proof}
This follows from~\eqref{eqn:key1} and
Proposition~\ref{prop:key}. (Note the correction in the sign of the
coefficients to account for the difference between $\cma(V)$ and
$\cmam(V)$.)
\end{proof}

\subsection{The $\{\Pbb^i\}$ coefficients}\label{ss:coefs}
It is occasionally useful to extract explicit expressions for the
coefficients $a_i$ appearing in Theorem~\ref{thm:basis}. It is in fact
a simple matter to obtain Poincar\'e duals of the classes
$\{\Pbb^i\}$, and this yields expressions of the coefficients for any
given class $\alpha$ as intersection degrees of $\alpha$ with certain
explicit classes in $\Pbb^n$.

\begin{lemma}\label{lem:efa}
For $i=0,\dots,n$ let
\[
\{\Pbb^i\}^* = \frac{(-1)^i}{(1+H)^{i+2}} [\Pbb^{n-i}]\quad.
\]
Then for all $0\le i,j\le n$, $\int \{\Pbb^i\}^* \cdot
\{\Pbb^j\}=\delta_{ij}$.
\end{lemma}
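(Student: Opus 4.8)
The plan is to compute the pairing $\int \{\Pbb^i\}^* \cdot \{\Pbb^j\}$ directly, using the explicit polynomial descriptions of both classes in the hyperplane class $H$ and reducing to a residue/coefficient extraction. First I would substitute the formulas: $\{\Pbb^i\}^* = \frac{(-1)^i}{(1+H)^{i+2}}\,[\Pbb^{n-i}] = \frac{(-1)^i}{(1+H)^{i+2}} H^i\cap[\Pbb^n]$, and $\{\Pbb^j\} = (-1)^j (1+H)^{j+1} H^{n-j}\cap[\Pbb^n]$ from \eqref{eqn:cmamp}. Multiplying, the product is $(-1)^{i+j}\,\frac{(1+H)^{j+1}}{(1+H)^{i+2}}\,H^{i+n-j}\cap[\Pbb^n] = (-1)^{i+j}(1+H)^{j-i-1} H^{\,n+i-j}\cap[\Pbb^n]$, and $\int$ of this picks out the coefficient of $H^n$, i.e. the coefficient of $H^{\,j-i}$ in the expansion of $(-1)^{i+j}(1+H)^{j-i-1}$.

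The remaining step is to check that this coefficient is $\delta_{ij}$. If $j = i$, the expansion is $(-1)^{2i}(1+H)^{-1} = 1 - H + H^2 - \cdots$, whose constant term (coefficient of $H^0$) is $1$, as required. If $j < i$, then $H^{n+i-j}$ already has exponent $> n$, so the class vanishes and the integral is $0$. If $j > i$, we need the coefficient of $H^{\,j-i}$ in $(1+H)^{j-i-1}$ with $j-i \ge 1$; since $j-i-1 \ge 0$ is a nonnegative integer, $(1+H)^{j-i-1}$ is a polynomial of degree $j-i-1 < j-i$, so that coefficient is $0$. This exhausts all cases.

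The computation is entirely formal once the two polynomial expressions are in hand, so I do not anticipate a genuine obstacle; the only point requiring a moment's care is bookkeeping the sign $(-1)^{i+j}$ — but on the diagonal $i = j$ it is automatically $+1$, and off the diagonal the coefficient vanishes regardless of sign, so the signs never actually matter. A mild subtlety worth flagging is that the formula for $\{\Pbb^i\}^*$ makes sense as written because $(1+H)$ is invertible in the Chow ring $A^*\Pbb^n = \Zbb[H]/(H^{n+1})$ (its inverse being the truncation of the geometric series), and one should note that only finitely many terms of $\frac{1}{(1+H)^{i+2}}$ survive after capping with $[\Pbb^{n-i}] = H^i\cap[\Pbb^n]$, namely those of $H$-degree $\le n-i$; this is implicit but harmless. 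I would present the argument as the one-line identity above followed by the three-case check.
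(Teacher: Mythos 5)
Your proposal is correct and follows exactly the paper's own argument: substitute both polynomial expressions, reduce to extracting the coefficient of $H^n$ in $(-1)^{i+j}(1+H)^{j-i-1}H^{n+i-j}$, and observe it equals $\delta_{ij}$. The only difference is that the paper declares the final coefficient check ``immediate,'' while you spell out the three cases $j=i$, $j<i$, $j>i$ explicitly.
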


\begin{proof}
The claim is that
\[
\int \frac{(-1)^i H^i}{(1+H)^{i+2}}\cap \{\Pbb^j\} = \delta_{ij}
\]
for $0\le i,j\le n$, i.e., that the degree of $H^n$ in the expansion
of
\[
\frac{(-1)^i H^i}{(1+H)^{i+2}}\cdot (-1)^j (1+H)^{j+1} H^{n-j}
=(-1)^{i+j} (1+H)^{j-i-1} H^{n+i-j}
\]
is the Kronecker delta $\delta_{ij}$; and this is immediate.
\end{proof}

\begin{prop}\label{prop:ai}
Let $\alpha=\sum_j \alpha_j [\Pbb^j]\in A_*\Pbb^n$, and assume
$\alpha = \sum_i a_i \{\Pbb^i\}$.
Then
\begin{equation}\label{eqn:ai}
a_i=\int \frac{(-1)^i H^i}{(1+H)^{i+2}}\cap \alpha
=\sum_{j=i}^{\dim V} \binom{j+1}{i+1} (-1)^j \alpha_j\quad.
\end{equation}
\end{prop}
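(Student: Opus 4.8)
The plan is to read off the first equality from the Poincar\'e duality relation of Lemma~\ref{lem:efa}, and then to obtain the closed form on the right by a single binomial expansion.

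First I would establish the identity $a_i=\int \frac{(-1)^i H^i}{(1+H)^{i+2}}\cap \alpha$. By hypothesis $\alpha=\sum_j a_j\,\{\Pbb^j\}$, so pairing with the dual class $\{\Pbb^i\}^*$ and invoking $\int \{\Pbb^i\}^*\cdot\{\Pbb^j\}=\delta_{ij}$ from Lemma~\ref{lem:efa} gives $\int \{\Pbb^i\}^*\cdot\alpha=\sum_j a_j\,\delta_{ij}=a_i$. It then remains to recognize that $\{\Pbb^i\}^*=\frac{(-1)^i}{(1+H)^{i+2}}[\Pbb^{n-i}]=\frac{(-1)^iH^i}{(1+H)^{i+2}}\cap[\Pbb^n]$, so that $\int\{\Pbb^i\}^*\cdot\alpha$ is literally the claimed intersection number; this is exactly the rewriting already performed in the proof of Lemma~\ref{lem:efa}.

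Next I would compute that intersection number in terms of the $\alpha_j$. Writing $[\Pbb^j]=H^{n-j}\cap[\Pbb^n]$ and using linearity in $\alpha=\sum_j\alpha_j[\Pbb^j]$, the number $\int \frac{(-1)^i H^i}{(1+H)^{i+2}}\cap\alpha$ equals $\sum_j \alpha_j\,(-1)^i$ times the coefficient of $H^{\,j-i}$ in $(1+H)^{-(i+2)}$. The binomial expansion $(1+H)^{-(i+2)}=\sum_{m\ge 0}(-1)^m\binom{i+1+m}{i+1}H^m$ shows that this coefficient is $(-1)^{\,j-i}\binom{j+1}{i+1}$, so the $j$-th term contributes $(-1)^j\binom{j+1}{i+1}\alpha_j$. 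Since $\binom{j+1}{i+1}=0$ for $j<i$, the sum collapses to the range $i\le j\le \dim V$ (with $\dim V$ the top dimension in which $\alpha$ is supported), which is the asserted formula.

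I do not anticipate any real obstacle: the statement is a formal consequence of Lemma~\ref{lem:efa} together with the elementary identity $\binom{-(i+2)}{m}=(-1)^m\binom{i+1+m}{i+1}$. The only point demanding a little care is the bookkeeping of signs and the index shift $m=j-i$ when extracting the coefficient of $H^n$, which I would simply carry out directly rather than dwell on.
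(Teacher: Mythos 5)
Your proof is correct and follows essentially the same route as the paper: the first equality is exactly the pairing against $\{\Pbb^i\}^*$ via Lemma~\ref{lem:efa}, and the second is the same coefficient extraction from the expansion of $(1+H)^{-(i+2)}$ (the paper states this step in one line; your explicit use of $\binom{-(i+2)}{m}=(-1)^m\binom{i+1+m}{i+1}$ with $m=j-i$ is just the computation written out, and you correctly use $[\Pbb^j]=H^{n-j}\cap[\Pbb^n]$ where the paper's one-line description has a small slip writing $H^j$). No gaps.
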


\begin{proof}
By Lemma~\ref{lem:efa},
\[
a_i = \sum_{j=0}^{n-1} a_j \delta_{ij} = \sum_{j=0}^{n-1} a_j \int
\{\Pbb^i\}^*\cdot \{\Pbb^j\} =\int \{\Pbb^i\}^*\cdot \alpha\quad,
\]
giving the first equality. The second equality is obtained by
computing the coefficient of $H^n$ in $\frac{(-1)^i H^i}{(1+H)^{i+2}}
\sum_j \alpha_j H^j$.
\end{proof}

\begin{example}\label{exa:aiforhyp}
Let $V\overset \iota\hookrightarrow \Pbb^n$ be a nonsingular hypersurface
of degree~$d$. Then
\[
\cmam(V)=(-1)^{n-1} \iota_* c(TV)\cap [V] = \sum_{i=0}^{n-1} d(d-1)^{n-1-i} \{\Pbb^i\}\quad.
\]
Indeed,
\begin{equation}\label{eqn:virtan}
\iota_* c(TV)\cap [V] = \frac{(1+H)^{n+1}\, dH}{1+dH}\cap [\Pbb^n]\quad,
\end{equation}
so by Proposition~\ref{prop:ai} the coefficient of $\{\Pbb^i\}$ in
$\cmam(V)$ equals the coefficient of $H^n$ in the expansion of
\[
(-1)^{n-1} \frac{(-1)^i H^i}{(1+H)^{i+2}}\cdot \frac{(1+H)^{n+1}\,
  dH}{1+dH} = (-1)^{n-i-1} \frac{(1+H)^{n-1-i}\, dH^{i+1}}{1+dH}\quad,
\]
i.e., the coefficient of $H^{n-i-1}$ in
\[
(-1)^{n-i-1}d\,\frac{(1+H)^{n-1-i}}{1+dH}\quad.
\]
By~\cite{onepage}, this equals the coefficient of $H^{n-i-1}$ in 
\[
(-1)^{n-i-1}d\,(1+(1-d)H)^{n-1-i}\quad,
\]
and this is $d(d-1)^{n-1-i}$.

More generally, we see that if $V$ is a hypersurface in $\Pbb^n$ and
$\dim \Sing V\le r$, and $\cmam=\sum_{i=0}^{n-1} a_i \{\Pbb^i\}$, then 
\[
a_i=d(d-1)^{n-1-i}\quad \text{for $i=r+1,\dots, n-1$.}
\]
Indeed, the Chern-Mather class of $V$ restricts to 
$c(TV_{\text{reg}})\cap [V_{\text{reg}}]$ on the nonsingular part 
$V_{\text{reg}}$, so it agrees with the Chern class of the virtual tangent
bundle of $V$ (given by the right-hand side of~\eqref{eqn:virtan}) in 
dimension $>r$.
\qede\end{example}

For instance, this implies that the dual of a nonsingular hypersurface $V$
of degree $d>1$ is a hypersurface of degree~$d(d-1)^{n-1}$: indeed,
the degree of $V^\vee$ is the coefficient of $\{\Pbb^{n-1}\}$ in
$\cmam(V^\vee)$, hence (by Theorem~\ref{thm:basis}) the coefficient of
$\{\Pbb^0\}$ in $\cmam(V)$, hence (by Example~\ref{exa:aiforhyp}) it
equals $d(d-1)^{n-1}$.

\begin{remark}\label{rem:polardegs}
In terms of conormal spaces, the equality $\cmam(V)=\sum_i a_i\{\Pbb^i\}$
is equivalent to $[\Phi_V]=\sum_i a_i [\Phi_{\Pbb^i}]$. Now
$\Phi_{\Pbb^i}=\Pbb^i\times \Pbb^{n-1-i}$ as a subvariety of $\Pbb^n\times \Pbb^n$;
thus
\[
[\Phi_V] = a_0 H^n h + \cdots + a_{n-1} H h^n\quad.
\]
Classically, the $a_i$ are known as the {\em ranks\/} (or {\em polar classes\/}) of $V$.
\qede\end{remark}

By Proposition~\ref{prop:ai}, the ranks of a projective variety $V\subsetneq
\Pbb^n$ may be obtained
from $\cmam(V)=\sum_j \alpha_j [\Pbb^j]$ by multiplying the $n\times n$
matrix $M$ with entries
\[
M_{ij} = (-1)^{j-1} \binom ji	
\]
by the column vector $(\alpha_0,\dots, \alpha_{n-1})^t$.
We can now observe that this matrix is {\em also\/} the matrix expressing the
coefficients $\alpha_j$ of the signed $\cmam(V)$ in terms of the ranks~$a_i$,
that is, the $n\times n$ matrix whose $(i,j)$ entry is the coefficient of 
$[\Pbb^{i-1}]$ in $\cmam(\Pbb^{j-1})$. In other words, this `matrix of Chern
coefficients' $M$ satisfies the following identity.

\begin{prop}\label{prop:squid}
For all $n$, $M^2=\Id_n$.
\end{prop}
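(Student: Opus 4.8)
The plan is to recognize $M$ as the change-of-basis matrix between two bases of the same space, so that $M^2 = \Id_n$ becomes the statement that applying the change of basis twice returns the identity. Concretely, by Proposition~\ref{prop:ai}, the matrix $M$ with entries $M_{ij} = (-1)^{j-1}\binom{j}{i}$ (indices running $1,\dots,n$, i.e.\ encoding $\binom{j-1+1}{i-1+1}(-1)^{j-1}$ after a shift) takes the coordinate vector $(\alpha_0,\dots,\alpha_{n-1})^t$ of a class $\alpha = \sum_j \alpha_j[\Pbb^j]$ in the fundamental-class basis $\{[\Pbb^0],\dots,[\Pbb^{n-1}]\}$ to the coordinate vector $(a_0,\dots,a_{n-1})^t$ in the signed-Chern-Mather basis $\{\{\Pbb^0\},\dots,\{\Pbb^{n-1}\}\}$. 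The remark following the proposition asserts that the \emph{same} matrix $M$ also expresses the coordinates $\alpha_j$ in terms of the $a_i$; granting that, $M^2 = \Id_n$ is immediate since $M$ then represents a linear map that is its own inverse. So the real content is to verify that $M$ is indeed its own inverse-representing matrix, i.e.\ that the $(i,j)$ entry of $M$ also equals the coefficient of $[\Pbb^{i-1}]$ in $\cmam(\Pbb^{j-1})$.

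First I would compute that coefficient directly from \eqref{eqn:cmamp}: $\{\Pbb^{k}\} = (-1)^k(1+H)^{k+1}H^{n-k}\cap[\Pbb^n]$, so the coefficient of $[\Pbb^{m}] = H^{n-m}\cap[\Pbb^n]$ in $\{\Pbb^k\}$ is the coefficient of $H^{n-m}$ in $(-1)^k(1+H)^{k+1}H^{n-k}$, namely $(-1)^k\binom{k+1}{k-m}= (-1)^k\binom{k+1}{m+1}$. Setting $k = j-1$ and $m = i-1$ gives $(-1)^{j-1}\binom{j}{i}$, which is exactly $M_{ij}$. This shows that $M$ is simultaneously the matrix of the map $(\alpha_j)\mapsto(a_i)$ and of the map $(a_i)\mapsto(\alpha_j)$ in the same ordered coordinates, so $M$ composed with $M$ is the identity on coordinate vectors, i.e.\ $M^2 = \Id_n$.

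Alternatively, and perhaps more cleanly for a self-contained write-up, I would verify the matrix identity $\sum_{k} M_{ik}M_{kj} = \delta_{ij}$ by a direct binomial computation: $\sum_k (-1)^{k-1}\binom{k}{i}(-1)^{j-1}\binom{j}{k} = (-1)^{i+j}\sum_k (-1)^{k-i}\binom{k}{i}\binom{j}{k}$, and then use the standard identity $\binom{j}{k}\binom{k}{i} = \binom{j}{i}\binom{j-i}{k-i}$ to pull out $\binom{j}{i}$ and reduce the remaining sum to $\sum_{k}(-1)^{k-i}\binom{j-i}{k-i} = (1-1)^{j-i} = \delta_{ij}$ (the boundary cases $i=j$ and $i>j$ being handled by the convention that the binomial vanishes). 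This is the kind of routine calculation I would relegate to a line or two.

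The only genuine subtlety—the step I expect to watch most carefully—is bookkeeping with the index shift and the sign conventions: the paper writes $a_i = \sum_{j\geq i}\binom{j+1}{i+1}(-1)^j\alpha_j$ in Proposition~\ref{prop:ai} but $M_{ij} = (-1)^{j-1}\binom{j}{i}$ with $1$-based indices, so I must be consistent about whether rows and columns are indexed from $0$ or from $1$, and track the parity factor $(-1)^{\dim V}$ relating $\cma(V)$ and $\cmam(V)$ so that it does not spuriously flip a sign. Once the indexing is pinned down, both routes above close the argument with no further obstacle.
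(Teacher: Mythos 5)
Your proposal is correct, and your ``alternative'' route is precisely the paper's proof: the paper establishes $M^2=\Id_n$ by the same one-line computation, writing $\sum_k (-1)^{k-1}(-1)^{j-1}\binom{i}{k}\binom{k}{j} = \binom{i}{j}\sum_k(-1)^{k-j}\binom{i-j}{k-j}=\delta_{ij}$ via trinomial revision and the alternating binomial sum. Your first route is genuinely different in emphasis: you \emph{prove} that the coefficient of $[\Pbb^{i-1}]$ in $\{\Pbb^{j-1}\}$ is $(-1)^{j-1}\binom{j}{i}=M_{ij}$ by expanding \eqref{eqn:cmamp}, and then conclude from Proposition~\ref{prop:ai} that $M$ represents mutually inverse changes of coordinates, so $M^2=\Id_n$. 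This is exactly the ``matrix of Chern coefficients'' interpretation the paper states in prose immediately before the proposition but does not use as the proof --- the paper instead verifies the binomial identity independently, which has the virtue of confirming the prose claim rather than assuming it. Your conceptual route is valid and non-circular (Proposition~\ref{prop:ai} rests on Lemma~\ref{lem:efa}, not on this proposition), provided you note, as you implicitly do, that the $\{\Pbb^i\}$ with $i<n$ form a $\Zbb$-basis of the dimension-$<n$ part of $A_*\Pbb^n$ (clear from triangularity of $M$ with $\pm1$ diagonal), so the two coordinate identities hold for every integer vector. Your index and sign bookkeeping checks out in both routes.
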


\begin{proof}
For all $1\le i,j \le n$,
\[
\sum_{k=1}^n (-1)^{k-1}(-1)^{j-1} \binom ik  \binom kj
=\sum_{k=1}^n (-1)^{k-j} \binom {i-j}{k-j} \binom ij = 
\delta_{ij} \binom ij = \delta_{ij}
\]
as needed.
\end{proof}

This intriguing fact appears to admit a nontrivial generalization to setting
of flag varieties, which will be discussed elsewhere.

\subsection{The Euclidean distance degree}\label{ss:ED}
The {\em Euclidean distance degree\/} of a variety is the number of
critical points of the squared distance to a general point outside the
variety. It is proven in~\cite[Theorem~5.4]{eudideg} that if the
incidence variety $\Phi=\Phi_V$, i.e., the projective conormal variety
$\Pbb (C^*_V\Pbb^n)$, does not meet the diagonal $\Delta(\Pbb^n) \subseteq
\Pbb^n\times \Pbb^n$, then the Euclidean distance degree of $V$ equals
the sum
\begin{equation}\label{eqn:eudeg}
\delta_0(V) + \cdots + \delta_{n-1}(V)\quad,
\end{equation}
where $\delta_i(V)$ are the polar degrees of $V$ (and hence equal the
coefficients~$a_i$ computed in~\S\ref{ss:coefs}, cf.~Remark~\ref{rem:polardegs}):
\[
[\Phi_V]=\delta_0(V) H^n h+ \cdots + \delta_{n-1}(V)H h^n\quad.
\]
The quantity \eqref{eqn:eudeg} is easy to compute in term of the
Chern-Mather class of $V$, by means of Corollary~\ref{corol:concla}.

\begin{prop}\label{prop:otta}
Let $V$ be a proper subvariety of $\Pbb^n$ of dimension $m$. Then
$\delta_i(V)=0$ for $i>m$, and
\[
\delta_0(V) + \cdots + \delta_m(V)=
\sum_{j=0}^m (-1)^{m+j} \cma(V)_j\, (2^{j+1}-1)\quad,
\]
where $\cma(V)_j$ is the degree of the $j$-dimensional component of
the Chern-Mather class of~$V$ in $\Pbb^n$.
\end{prop}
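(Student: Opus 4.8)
The plan is to read off both assertions directly from Corollary~\ref{corol:concla}, which expresses $[\Phi_V]$ explicitly in terms of the degrees $\cma(V)_j$, together with the identification of the polar degrees as the coefficients of $[\Phi_V]$ in the monomial basis of $A_*(\Pbb^n\times\Pbb^{n\vee})$ recorded in Remark~\ref{rem:polardegs}: that is, $\delta_i(V)$ is the coefficient of $H^{n-i}h^{i+1}$ in $[\Phi_V]$.

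First I would use that $\cma(V)_j=0$ for $j>m=\dim V$, so that the sum in~\eqref{eqn:concla} may be taken over $j=0,\dots,m$. Expanding each term by the binomial theorem gives
\[
(H+h)^{j+1}H^{n-j}=\sum_{l=0}^{j+1}\binom{j+1}{l}H^{n+1-l}h^l\quad,
\]
and the $l=0$ summand, which carries $H^{n+1}$, vanishes in $A_*(\Pbb^n\times\Pbb^{n\vee})$. Matching the coefficient of $H^{n-i}h^{i+1}$ (the $l=i+1$ term) then yields
\[
\delta_i(V)=\sum_{j=0}^{n-1}(-1)^{m+j}\binom{j+1}{i+1}\cma(V)_j=\sum_{j=i}^{m}(-1)^{m+j}\binom{j+1}{i+1}\cma(V)_j\quad,
\]
the last equality because $\binom{j+1}{i+1}=0$ for $j<i$ and $\cma(V)_j=0$ for $j>m$. (Equivalently, one can quote the formula for the ranks $a_i=\delta_i(V)$ from Proposition~\ref{prop:ai} applied to $\cmam(V)=(-1)^m\cma(V)$.) When $i>m$ this sum is empty, which proves $\delta_i(V)=0$ for $i>m$.

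For the remaining identity I would sum over $i$ and interchange the order of summation:
\[
\sum_{i=0}^{n-1}\delta_i(V)=\sum_{j=0}^{m}(-1)^{m+j}\cma(V)_j\sum_{i=0}^{n-1}\binom{j+1}{i+1}=\sum_{j=0}^{m}(-1)^{m+j}\cma(V)_j\bigl(2^{j+1}-1\bigr)\quad,
\]
using $\sum_{k=1}^{j+1}\binom{j+1}{k}=2^{j+1}-1$ (the $k=0$, i.e.\ $H^{n+1}$, term being absent, and $j+1\le m+1\le n$ so no terms are lost at the top). Since $\delta_i(V)=0$ for $i>m$, the left-hand side equals $\delta_0(V)+\cdots+\delta_m(V)$, giving the asserted formula.

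I do not expect a genuine obstacle here; the only points requiring care are the bookkeeping of which monomials $H^ah^b$ survive in the Chow ring of $\Pbb^n\times\Pbb^{n\vee}$ — precisely those with $1\le a\le n$ and $1\le b\le n$, which is exactly what cuts the naive $2^{j+1}$ down to $2^{j+1}-1$ — and keeping straight the sign $(-1)^{m+j}$ arising from the passage between $\cma(V)$ and $\cmam(V)$ in~\eqref{eqn:concla}.
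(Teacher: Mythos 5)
Your proposal is correct and follows essentially the same route as the paper: both read the result off from Corollary~\ref{corol:concla} together with the identification of $\delta_i(V)$ as the coefficient of $H^{n-i}h^{i+1}$ in $[\Phi_V]$. The paper merely compresses your term-by-term binomial expansion and summation into the instruction ``formally set $H=h=1$,'' with the vanishing of $H^{n+1}$ accounting for the same $-1$ you track explicitly.
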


\begin{proof}
Formally set $H=h=1$ in the formula for $[\Phi_V]$ obtained in
Corollary~\ref{corol:concla}.  (The fact that $H^{n+1}=0$ accounts for
the `$-1$'.)  The vanishing of $\delta_i(V)$ for $i>m$ is also
immediate from~\eqref{eqn:concla}.
\end{proof}

Proposition~\ref{prop:otta} generalizes to arbitrarily singular varieties the 
formula appearing in Theorem~5.8 in~\cite{eudideg}: in the singular case,
Chern-Mather classes play in this formula the same role as ordinary 
Chern classes in the nonsingular case.
The right-hand side of this formula computes the Euclidean distance
degree of any (nonsingular or otherwise) closed subvariety $V\subsetneq
\Pbb^n$ such that $\Phi_V$ does not meet the diagonal
$\Delta(\Pbb^n)\subseteq\Pbb^n\times \Pbb^{n\vee}$.

Other general formulas for the Euclidean distance degree in terms of degrees 
of polar classes are given in~\cite[\S5]{MR3335572}. The precise relation between 
polar classes and Chern-Mather classes is studied in~\cite{MR1074588}.


\section{Examples and applications}\label{sec:examples}

\subsection{Pl\"ucker formulae}\label{ss:Pluecker}
Several formulas of Pl\"ucker type may be recast in terms of
Theorem~\ref{thm:invo}; we recover the classical form of the Pl\"ucker
formulae for plane curves as an illustration.

Let $C$ be a reduced, irreducible
plane curve of degree $d\ge 2$,
and let $p_1,\dots, p_r$ be the singular points of $C$. Let $m_i$,
resp., $\mu_i$ be the multiplicity, resp., Milnor number of $C$ at
$p_i$. A standard computation gives
\begin{equation}\label{eqn:cmaC}
\cmam(C) = -(dH+(3d-d^2+\sum_i (\mu_i+m_i-1)) H^2) \cap [\Pbb^2]\quad.
\end{equation}
(Proof: The Chern-Mather class of $C$ is the image of the local Euler
obstruction $\Eu_C$ via MacPherson's natural transformation. Since the
local Euler obstruction of a curve equals the multiplicity function,
we have $ \Eu_C = \one_C + \sum_i (m_i-1) \one_{p_i} $, and therefore
\[
\cma(C) = d[\Pbb^1]+\chi(C) [\Pbb^0]+ \sum_i (m_i-1)[p_i] = 
(d H + (\chi(C) +\sum_i(m_i-1)) H^2)\cap [\Pbb^2]
\]
after push-forward to $\Pbb^2$. The Euler characteristic of $C$ equals
the Euler characteristic of a nonsingular curve of degree $d$
corrected by the presence of the singular points; each points
contributes by its Milnor number, and this gives the stated formula.)

Let $\rho_i=\mu_i+m_i-1$. For example, $\rho_i=2$ if $p_i$ is a node
and $\rho_i=3$ if $p_i$ is a cusp.

\begin{prop}
With notation as above:
\begin{equation}\label{eqn:Cdu}
\cmam(C^\vee) = -\left((d(d-1)-\sum_i \rho_i)H+(d(2d-3)-2\sum_i \rho_i) H^2\right)
\cap [\Pbb^2]\quad.
\end{equation}
\end{prop}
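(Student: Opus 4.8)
The plan is to apply Theorem~\ref{thm:invo} directly with $n=2$. Abbreviate $\rho:=\sum_i\rho_i$. Since $C$ is a curve it is a proper closed subvariety of $\Pbb^2$, and formula~\eqref{eqn:cmaC} says $\cmam(C)=q(H)\cap[\Pbb^2]$ for the polynomial
\[
q(t)=-d\,t-(3d-d^2+\rho)\,t^2,
\]
which has degree $\le 2=n$, so the hypothesis of Theorem~\ref{thm:invo} is met. The theorem then gives $\cmam(C^\vee)=\cJ_2(q)(h)\cap[\Pbb^{2\vee}]$, and the whole proof reduces to evaluating $\cJ_2(q)$ from its definition $\cJ_2(q)(t)=q(-1-t)-q(-1)\big((1+t)^3-t^3\big)$.

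First I would substitute $t\mapsto -1-t$ into $q$, getting $q(-1-t)=d(1+t)-(3d-d^2+\rho)(1+t)^2$, and expand; its constant coefficient is $q(-1)=d-(3d-d^2+\rho)=d^2-2d-\rho$. Next I would use $(1+t)^3-t^3=1+3t+3t^2$ and subtract $q(-1)(1+3t+3t^2)$ from the expansion of $q(-1-t)$. The constant terms cancel — as they must, since $\cJ_2$ lands in the ideal $(t)$ (equivalently, $\cmam(C^\vee)$ is supported in dimension $<2$) — and collecting the remaining coefficients yields
\[
\cJ_2(q)(t)=-\big(d(d-1)-\rho\big)\,t-\big(d(2d-3)-2\rho\big)\,t^2,
\]
which is exactly the right-hand side of~\eqref{eqn:Cdu} read as a polynomial in the hyperplane class $h$ of $\Pbb^{2\vee}$. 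This proves the proposition.

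There is no real obstacle here: the content is the bookkeeping in the expansion of a quadratic under $\cJ_2$, and the only point to watch is the sign of the correction term $q(-1)\big((1+t)^3-t^3\big)$, which is precisely what kills the degree-$(n+1)$ contribution. Alternatively one could route the computation through Theorem~\ref{thm:basis}: expand $\cmam(C)$ in the basis $\{\Pbb^0\},\{\Pbb^1\}$ via Proposition~\ref{prop:ai} to read off the polar degrees $a_0,a_1$ of $C$, swap $a_0\leftrightarrow a_1$, and re-expand. This makes transparent the classical reading of $d(d-1)-\sum_i\rho_i$ and $d(2d-3)-2\sum_i\rho_i$ as the degree of the dual curve and the rank of $C^\vee$, but it carries out the same arithmetic.
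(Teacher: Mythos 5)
Your proposal is correct and follows exactly the paper's own proof, which simply says ``Apply the involution in Theorem~\ref{thm:invo} to~\eqref{eqn:cmaC}''; you have just carried out the evaluation of $\cJ_2(q)$ explicitly, and the arithmetic checks out.
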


\begin{proof}
Apply the involution in Theorem~\ref{thm:invo} to~\eqref{eqn:cmaC}.
\end{proof}

As an immediate consequence, we recover a (well-known) general form of
the classical Pl\"ucker formula for plane curves.

\begin{corol}
$
\deg C^\vee = d(d-1) - \sum_i \rho_i
$.
\end{corol}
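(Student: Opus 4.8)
The plan is to read off the degree of $C^\vee$ directly from the formula for $\cmam(C^\vee)$ established in the preceding proposition. Recall that $C^\vee$ is a plane curve, so $\dim C^\vee = 1$ and hence $\cmam(C^\vee) = -\cma(C^\vee)$; the $H^2$ coefficient of $\cmam(C^\vee)$ is (up to sign) the degree of the zero-dimensional component of the Chern-Mather class of $C^\vee$, while the $H$ coefficient is (up to sign) the degree of its one-dimensional component, namely $\deg C^\vee$. Concretely, writing $\cmam(C^\vee) = -(\delta\, H + e\, H^2)\cap[\Pbb^2]$ with $\delta = d(d-1) - \sum_i \rho_i$ from equation~\eqref{eqn:Cdu}, the coefficient $\delta$ of $H$ is precisely $\deg C^\vee$.

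First I would note that for any reduced curve $D\subseteq \Pbb^2$ of degree $e$, the Chern-Mather class has the form $\cma(D) = e[\Pbb^1] + (\text{something})[\Pbb^0]$, since the top-dimensional part of the Chern-Mather class always equals $[D]$ (the Chern-Mather class restricts to $c(TD_{\text{reg}})\cap[D_{\text{reg}}]$ on the regular locus, whose zero-degree term is just the fundamental class). Pushing forward to $\Pbb^2$, the coefficient of $H$ in $\cma(D)\cap[\Pbb^2]$ is therefore $\deg D$. Applying this with $D = C^\vee$ and comparing with~\eqref{eqn:Cdu}, which gives $\cmam(C^\vee) = -\cma(C^\vee)$ with $H$-coefficient $d(d-1)-\sum_i\rho_i$, yields $\deg C^\vee = d(d-1) - \sum_i \rho_i$.

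There is essentially no obstacle here: the corollary is a one-line extraction from the proposition, and the only thing to be careful about is the bookkeeping of signs, i.e., that the dimension shift built into $\cmam$ versus $\cma$ does not alter the $H$-coefficient's interpretation as $\deg C^\vee$. Since both $C$ and $C^\vee$ are curves (assuming $C$ is not a line, which is guaranteed by $d\ge 2$, so that $C^\vee$ is genuinely a curve and not a point), the same sign $(-1)^{\dim} = -1$ appears on both sides, and the identification of the linear coefficient with the degree is unambiguous.
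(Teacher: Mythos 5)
Your proof is correct and matches the paper's (implicit) argument: the corollary is read off as the coefficient of $H$ in~\eqref{eqn:Cdu}, using that the top-dimensional component of $\cma(C^\vee)$ is $\deg C^\vee\,[\Pbb^1]$ and that the sign $(-1)^{\dim C^\vee}=-1$ is absorbed by the overall minus sign. The extra care you take with the sign bookkeeping and with $d\ge 2$ ensuring $C^\vee$ is a curve is sound but does not change the route.
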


The coefficient of $H^2$ in~\eqref{eqn:Cdu} also gives some
information.  Let $q_1,\dots, q_s$ be the singular points of $C^\vee$,
and $\rho^\vee_j$, $j=1,\dots, s$, the corresponding $\rho$
numbers. Then~\eqref{eqn:Cdu} implies that
\[
\sum_j \rho^\vee_j = R^2 -(2d^2-2d-1) R+ d^3(d-2)
\]
where $R=\sum_i \rho_i$.
The same conclusion may be drawn by applying biduality. For example,
if $C$ is nonsingular, so that $R=0$, and $C$ has $b$ bitangents and
$f$ inflection points (counted with appropriate multiplicities) this
shows that $2b+3f=d^3(d-2)$.

More generally, R.~Piene evaluates the contribution to the
Chern-Mather class of a hypersurface $X$ in $\Pbb^n$ due to isolated
singularities $p_i$ (\cite[p.~25]{MR1074588}) :
\begin{equation}\label{eqn:PieneMather}
\cma(X) = \cma(X_g) + \sum_i (-1)^n(\mu_X(p_i)+\mu_{X\cap
  \uH_i}(p_i))[\Pbb^0]
\end{equation}
where $X_g$ is a nonsingular hypersurface of the same degree as $X$
(so that $\cma(X_g)$ equals the push-forward of the ordinary total
Chern class of $X_g$) and $\uH_i$ is a general hyperplane through
$p_i$. Therefore, with notation as in~\S\ref{sec:intro} we have
\[
\cmam(X) = \cmam(X_g) - \sum_i (\mu_X(p_i)+\mu_{X\cap
  \uH_i}(p_i))\{\Pbb^0\}
\]
and Theorem~\ref{thm:main} implies
\[
\cmam(X^\vee) = \cmam(X_g^\vee) 
- \sum_i (\mu_X(p_i) + \mu_{X\cap \uH_i}(p_i))\{\Pbb^{n-1}\}\quad.
\]
Reading off the terms of dimension~$n-1$ in this identity
(cf.~Example~\ref{exa:aiforhyp}) yields
\begin{equation}\label{eqn:TeiPlu}
\deg(X^\vee) = d(d-1)^{n-1} - \sum_i (\mu_X(p_i) + \mu_{X\cap
  \uH_i}(p_i))\quad,
\end{equation}
that is, B.~Teissier's generalized Pl\"ucker
formula~\cite[II.3]{TeisRes}, \cite[p.~16]{MR1074588}.  Of course the
nontrivial input here is Piene's
generalization~\eqref{eqn:PieneMather} of the basic
formula~\eqref{eqn:cmaC}; we are simply observing that
Theorem~\ref{thm:main} makes the derivation of~\eqref{eqn:TeiPlu}
from~\eqref{eqn:PieneMather} particularly transparent.  Teissier has
shown that $\mu_X(p) + \mu_{X\cap \uH_i}(p)$ equals the multiplicity
of the Jacobian ideal at $p$ (in fact this is an ingredient in Piene's
proof of~\eqref{eqn:PieneMather}), and S.~Kleiman generalized the
Teissier-Pl\"ucker formula to arbitrary varieties with isolated
singularities by replacing this multiplicity with the Buchsbaum-Rim
multiplicity of the Jacobian matrix, \cite[Theorem~2]{MR1272702}. It
would be interesting to interpret Kleiman's formula as a computation
of a Chern-Mather class; for complete intersections, the Buchsbaum-Rim
multiplicity at $p$ equals $\mu_X(p) + \mu_{X\cap \uH_i}(p)$
(see \cite[(3.3.1)]{MR1702106}).

The same invariants appearing in the Pl\"ucker formula for curves
allow us to generalize another result from~\cite{eudideg}, by applying
Proposition~\ref{prop:otta} to expression~\eqref{eqn:cmaC} (or
equivalently~\eqref{eqn:Cdu}).

\begin{corol}\label{corol:otta}
Let $C\subseteq \Pbb^n$ be a reduced curve of degree~$d$. Assume
that $C$ meets the isotropic quadric transversally, and let $R=\sum_i
\rho_i$ be the invariant associated with a general plane projection of
$C$. Then the Euclidean distance degree of $C$ is given by
\[
\text{EDdegree}(C)=d^2-R\quad.
\]
\end{corol}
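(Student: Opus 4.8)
The plan is to feed the Chern-Mather class of $C$ into Proposition~\ref{prop:otta}. Since $\dim C=1$, only the two degrees $\cma(C)_1$ and $\cma(C)_0$ enter that formula. The first equals $\deg C=d$: the top-dimensional part of a Chern-Mather class is the fundamental class, as one sees from $\Eu_C=\one_C+\sum_i(m_i-1)\one_{p_i}$ (the local Euler obstruction of a curve being its multiplicity function) together with the fact that $\cma(C)$ is the image of $\Eu_C$ under MacPherson's transformation. For the second, I would use that $\cma(C)_0$ is the degree of $\cma(C)$, which by MacPherson's theorem is $\int\Eu_C\,d\chi=\chi(C)+\sum_{p\in\Sing C}(m_p-1)$.

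The key point is that this integer is already computed by a general plane projection $\pi\colon C\to C'\subseteq\Pbb^2$, so that~\eqref{eqn:cmaC} applies and yields $\cma(C)_0=3d-d^2+R$, where $R=\sum_i\rho_i$ is the invariant attached to $C'$ in the statement. The morphism $\pi$ is finite and birational; a general center of projection misses $C$ and its tangent developable and merges no branches at the singular points of $C$, so the only singularities of $C'$ not already present on $C$ are a finite set of ordinary (``apparent'') nodes arising from chords of $C$ meeting the center. Passing from $C$ to $C'$ each such node decreases the Euler characteristic by $1$ and increases the excess term $\sum(m-1)$ by $1$, and these cancel; hence $\chi(C)+\sum_{p\in\Sing C}(m_p-1)=\chi(C')+\sum_{q\in\Sing C'}(m_q-1)=\cma(C')_0=3d-d^2+R$ by~\eqref{eqn:cmaC}. (Formula~\eqref{eqn:cmaC} holds verbatim for reducible reduced plane curves once $R$ is taken to sum over all singular points, component intersections included, so this also covers reducible $C$, argued componentwise.)

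With $\cma(C)_1=d$ and $\cma(C)_0=3d-d^2+R$, Proposition~\ref{prop:otta} with $m=1$ gives $\delta_i(C)=0$ for $i>1$ and
\[
\delta_0(C)+\delta_1(C)=\sum_{j=0}^{1}(-1)^{1+j}\,\cma(C)_j\,(2^{j+1}-1)=3\,\cma(C)_1-\cma(C)_0=3d-(3d-d^2+R)=d^2-R .
\]
It remains to pass from $\sum_i\delta_i(C)$ to the Euclidean distance degree: the hypothesis that $C$ meets the isotropic quadric transversally is, by the analysis in~\cite{eudideg}, exactly the condition that the conormal variety $\Phi_C$ be disjoint from the diagonal $\Delta(\Pbb^n)\subseteq\Pbb^n\times\Pbb^{n\vee}$, under which~\cite[Theorem~5.4]{eudideg} (recalled at the start of~\S\ref{ss:ED}) identifies $\text{EDdegree}(C)$ with $\sum_i\delta_i(C)=d^2-R$.

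The main obstacle I anticipate is the middle step: giving a clean justification that $\cma(C)_0$ is unaffected by a general plane projection — i.e.\ that the apparent nodes introduced by the projection exactly offset the drop in topological Euler characteristic — and, relatedly, pinning down that ``transversal to the isotropic quadric'' is precisely the hypothesis forcing $\Phi_C\cap\Delta(\Pbb^n)=\emptyset$. Both are essentially contained in the cited references, but each deserves an explicit sentence.
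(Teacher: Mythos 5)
Your proof is correct and follows the paper's own route: apply Proposition~\ref{prop:otta} to the class~\eqref{eqn:cmaC} of a general plane projection of $C$, then invoke \cite[Theorem~5.4]{eudideg} via the transversality hypothesis. The only divergence is that where the paper simply cites Piene's result that the Chern-Mather class is preserved by a general projection (\cite[Corollaire, p.~20]{MR1074588}), you re-derive the one instance actually needed --- $\cma(C)_0=\chi(C)+\sum_p(m_p-1)$ is unchanged because each apparent node drops $\chi$ by $1$ while adding $1$ to $\sum(m_p-1)$ --- which is a correct and more self-contained justification of that step.
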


Here we also use the fact that the Chern-Mather class is preserved by
a general projection (\cite[Corollaire, p.~20]{MR1074588}; and
cf.~\cite[Corollary 6.1]{eudideg}). The `isotropic quadric' in $\Pbb^n$
is the quadric with equation $x_0^2+\cdots + x_n^2=0$. 
For a discussion of the requirement that $C$ meets it
transversally see \cite{eudideg}, particularly the comment
following~Theorem~5.4. Corollary~\ref{corol:otta}
generalizes~\cite[Corollary~5.9]{eudideg} to singular curves.

\subsection{Self-dual varieties}\label{ss:selfdual}
A variety $V\subseteq \Pbb^n$ is {\em self-dual\/} if there exists an
isomorphism $\Pbb^n\to\Pbb^{n\vee}$ restricting to an isomorphism 
$V\to V^\vee$.  Theorem~\ref{thm:invo} implies immediately 
that the Chern-Mather class of a self-dual plane curve is determined 
by its degree.

\begin{prop}
If $C$ is a self-dual curve of degree $d$ in $\Pbb^2$, then
\[
\cma(C)=d[\Pbb^1]+d[\Pbb^0]\quad.
\]
\end{prop}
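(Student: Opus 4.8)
The plan is to feed the basic structure of $\cmam(C)$ into Theorem~\ref{thm:invo} and then use that the Chern-Mather class is an intrinsic invariant, so it is carried to $\cmam(C^\vee)$ by whatever isomorphism realizes the self-duality. First I would write $\cmam(C)=q(H)\cap[\Pbb^2]$; since $C$ is a proper subvariety, $\cmam(C)$ has no $[\Pbb^2]$-component, so $q(t)=at+bt^2$ for integers $a,b$ (degree $\le 2$, no constant term). Theorem~\ref{thm:invo} then gives $\cmam(C^\vee)=\cJ_2(q)(h)\cap[\Pbb^{2\vee}]$.

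Next I would invoke self-duality. Let $\phi\colon\Pbb^2\to\Pbb^{2\vee}$ be an isomorphism with $\phi(C)=C^\vee$. Being an isomorphism of projective planes, $\phi$ pulls the hyperplane class $h$ back to $H$, so $\phi_*(H^k\cap[\Pbb^2])=h^k\cap[\Pbb^{2\vee}]$; and since the Chern-Mather class is preserved under isomorphisms, $\cmam(C^\vee)=\cmam(\phi(C))=\phi_*\cmam(C)=q(h)\cap[\Pbb^{2\vee}]$. Comparing with the previous paragraph and using that the classes $h^k\cap[\Pbb^{2\vee}]$, $0\le k\le 2$, form a basis of $A_*\Pbb^{2\vee}$, I conclude $\cJ_2(q)=q$ as polynomials of degree $\le 2$.

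It then remains to run the short computation of $\cJ_2$ on $q(t)=at+bt^2$ from the definition $\cJ_2(p)(t)=p(-1-t)-p(-1)\bigl((1+t)^3-t^3\bigr)$, which yields $\cJ_2(q)(t)=(2a-b)t+(3a-2b)t^2$; the equation $\cJ_2(q)=q$ forces $a=b$. Finally, since $\cma(C)=-\cmam(C)=-q(H)\cap[\Pbb^2]=-a[\Pbb^1]-a[\Pbb^0]$ and the coefficient of $[\Pbb^1]$ in a Chern-Mather class is the degree, $-a=d$, giving $\cma(C)=d[\Pbb^1]+d[\Pbb^0]$. (Alternatively, one reaches the same place by equating~\eqref{eqn:cmaC} with $\phi_*$ of~\eqref{eqn:Cdu}, which forces $\sum_i\rho_i=d(d-2)$ and hence $\cmam(C)=-(dH+dH^2)\cap[\Pbb^2]$.)

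There is no real obstacle here: the content is entirely carried by Theorem~\ref{thm:invo}, and the only point deserving care is the identification of $\Pbb^{2\vee}$ with $\Pbb^2$ through $\phi$ — i.e.\ that $\phi_*$ matches the powers of the hyperplane class and respects Chern-Mather classes — after which the statement follows from a two-line polynomial computation together with the fact that the top-dimensional term of $\cma(C)$ is $(\deg C)\,[\Pbb^1]$.
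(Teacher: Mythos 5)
Your proof is correct and follows essentially the same route as the paper: write the class as a polynomial in $H$, use self-duality to force $\cJ_2(q)=q$ via Theorem~\ref{thm:invo}, and solve the resulting linear equations to get $a=b$. The only difference is that you spell out the identification of $\Pbb^{2\vee}$ with $\Pbb^2$ through the self-duality isomorphism (which the paper leaves implicit), and you track the sign $\cma(C)=-\cmam(C)$ explicitly; both points are handled correctly.
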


\begin{proof}
If $\cma(C)=(dH+eH^2)\cap [\Pbb^2]$ and $C$ is self-dual, then
$\cJ_2(dH+eH^2) =dH+eH^2$ by Theorem~\ref{thm:invo}. By definition,
$\cJ_2(dH+eH^2)=(2d-e)H+ (3d-2e)H^2$. Thus necessarily $d=e$ for
self-dual curves.
\end{proof}

Applying Proposition~\ref{prop:otta}, we see that the Euclidean
distance degree of a self-dual curve of degree $d$ (meeting the
isotropic conic transversally) is $2d$.  There are self-dual curves of
all degrees $d\ge 2$: for all $1\le k\le d-1$, the curve $y^d = x^k
z^{d-k}$ is self-dual.

Analogous results can be obtained e.g., for surfaces in $\Pbb^3$.
\begin{prop}\label{prop:sdsurf}
Let $S\subseteq \Pbb^3$ be a self-dual surface of degree~$d$. Then
\[
\cma(S)=d[\Pbb^2]+e[\Pbb^1]+2(e-d)[\Pbb^0]
\]
for some integer $e$.
\end{prop}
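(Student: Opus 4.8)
The plan is to mimic exactly the argument used for self-dual plane curves in the previous proposition, now working one dimension higher. First I would write the pushed-forward Chern-Mather class of $S$ as a general class in $A_*\Pbb^3$ supported in dimension $\le 2$, say
\[
\cma(S) = d[\Pbb^2] + e[\Pbb^1] + f[\Pbb^0]\quad,
\]
where the leading coefficient is forced to be $\deg S = d$. Passing to the signed class gives $\cmam(S) = (-1)^{2}\cma(S) = -\,(\text{the dimension-$2$ part}) + \cdots$; more carefully, since $\dim S = 2$, we have $\cmam(S) = \cma(S)$ componentwise only up to the sign $(-1)^{\dim S}=1$, but the components of lower dimension also pick up signs relative to a uniform polynomial presentation, so the cleanest route is to convert $\cma(S)$ into a polynomial $q(H)$ with $\cmam(S) = q(H)\cap[\Pbb^3]$ using $[\Pbb^j] = H^{3-j}\cap[\Pbb^3]$ and the overall sign $(-1)^{\dim S}$, then apply $\cJ_3$.

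Next I would compute $\cJ_3(q)$ directly from the definition $\cJ_n(p)(t) = p(-1-t) - p(-1)\big((1+t)^{n+1}-t^{n+1}\big)$ with $n=3$. Writing $q(t) = -(d\,t + e\,t^2 + f\,t^3)$ (the sign coming from $\cmam = (-1)^{\dim S}\cma$ and the fact that $[\Pbb^2]=H\cap[\Pbb^3]$, $[\Pbb^1]=H^2\cap[\Pbb^3]$, $[\Pbb^0]=H^3\cap[\Pbb^3]$), I would expand $q(-1-t)$, evaluate $q(-1)$, and assemble $\cJ_3(q)(t)$ as an explicit cubic in $t$. Self-duality forces $\cJ_3(q) = q$ by Theorem~\ref{thm:invo} (composed with the isomorphism $\Pbb^3\cong\Pbb^{3\vee}$ identifying $S$ with $S^\vee$, which sends $H$ to $h$ and hence identifies the two polynomial presentations). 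Matching coefficients of $t$, $t^2$, $t^3$ in the equation $\cJ_3(q)=q$ yields a linear system in $d,e,f$. I expect the $t^3$-coefficient equation to be automatically satisfied (it encodes $\deg S = \deg S^\vee$, a tautology under self-duality since $S$ is a surface so $S^\vee$ is again a surface of the same dimension), the $t$-coefficient equation to again be automatic or to re-derive $\deg S^\vee = \deg S$, and the $t^2$-coefficient equation to produce the single nontrivial relation, which should simplify to $f = 2(e-d)$ after using $\binom{4}{k}$ binomial coefficients.

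The only real obstacle is bookkeeping with signs and with the passage between the "$[\Pbb^j]$ basis" presentation of $\cma(S)$ and the "polynomial in $H$" presentation required by $\cJ_n$; a single sign error propagates into the wrong linear relation. To guard against this I would sanity-check the computation against the known self-dual-curve case (Proposition preceding Proposition~\ref{prop:sdsurf}, where $\cJ_2(dH+eH^2) = (2d-e)H+(3d-2e)H^2$ forces $d=e$) and against Example~\ref{exa:aiforhyp}, which pins down the dimension-$2$ and dimension-$1$ coefficients of $\cmam$ for a smooth surface and their images under duality. Once the relation $f = 2(e-d)$ is extracted, I would translate back: $\cma(S) = d[\Pbb^2] + e[\Pbb^1] + 2(e-d)[\Pbb^0]$, which is precisely the claimed form with the stated integer $e$ left free. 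An alternative, essentially equivalent, route is to use Theorem~\ref{thm:basis}: writing $\cmam(S) = \sum_{i=0}^{2} a_i\{\Pbb^i\}$, self-duality gives $a_i = a_{2-i}$, i.e. $a_0 = a_2$, and then Proposition~\ref{prop:ai} converts $a_0=a_2$ into the relation among $d,e,f$; I might include this as a remark since it exhibits the symmetry $a_i = a_{n-1-i}$ transparently.
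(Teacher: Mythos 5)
Your proposal is correct and follows essentially the same route as the paper: write $\cma(S)=(dH+eH^2+fH^3)\cap[\Pbb^3]$, apply $\cJ_3$ via Theorem~\ref{thm:invo}, and equate coefficients (equivalently, impose $a_0=a_2$ via Theorem~\ref{thm:basis}) to obtain $f=2(e-d)$. One small slip to fix in the write-up: since $\dim S=2$ we have $(-1)^{\dim S}=+1$, so $q(H)=dH+eH^2+fH^3$ with no overall minus sign; by $\Zbb$-linearity of $\cJ_3$ this does not affect the relation you extract.
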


\begin{proof}
According to Theorem~1.3, if $\cma(S)=(dH+eH^2+fH^3)\cap [\Pbb^3]$,
then
\[
\cma(S^\vee) = ((3d-2e+f)H+(6d-5e+3f)H^2+(4d-4e+3f)H^3)\cap [\Pbb^3]
\]
(if $3d-2e+f\ne 0$). Equating these two expressions determines $f$.
\end{proof}

If $S$ has isolated singularities, then necessarily $e=-d(d-4)$, so
this class equals
\begin{equation}\label{eqn:sdsur}
\cma(S)=d[\Pbb^2] - d(d-4) [\Pbb^1] -2d(d-3) [\Pbb^0]\quad.
\end{equation}
Examples include the nonsingular quadric, the cubic surface $x_0^3=x_1 x_2
x_3$, and the Kummer quartic surface (\cite[p.~784]{MR507725}).
According to~\eqref{eqn:sdsur}, any self-dual quartic surface with
isolated singularities must have Chern-Mather class equal to
$4[\Pbb^2]-8[\Pbb^0]$. (Another candidate is a quartic surface with
$7$ ordinary double points and $3$ singularities of type $A_3$
constructed in~\cite{naruki}.)  This is a direct consequence of
Theorem~\ref{thm:invo}, and does not require any specific knowledge of
these surfaces. Similarly, no further work is needed to establish the
following result.

\begin{corol}
The Euclidean distance degree of a self-dual surface of degree~$d$ with 
isolated singularities in $\Pbb^3$, transversal to the isotropic quadric, 
is~$d^2+d$.
\end{corol}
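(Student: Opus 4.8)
The plan is to combine the previous corollary on the Euclidean distance degree of a self-dual surface with isolated singularities with the formula of Proposition~\ref{prop:otta}. First I would recall that, by the discussion preceding this corollary, a self-dual surface $S\subseteq\Pbb^3$ of degree $d$ with isolated singularities has Chern-Mather class given by~\eqref{eqn:sdsur}, namely $\cma(S)=d[\Pbb^2]-d(d-4)[\Pbb^1]-2d(d-3)[\Pbb^0]$; in the notation of Proposition~\ref{prop:otta} this reads $\cma(S)_2=d$, $\cma(S)_1=-d(d-4)$, $\cma(S)_0=-2d(d-3)$, with $m=\dim S=2$.

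Next I would invoke Proposition~\ref{prop:otta}, whose right-hand side $\sum_{j=0}^m(-1)^{m+j}\cma(V)_j\,(2^{j+1}-1)$ computes the Euclidean distance degree provided $\Phi_V$ does not meet the diagonal $\Delta(\Pbb^n)\subseteq\Pbb^n\times\Pbb^{n\vee}$. The hypothesis that $S$ is transversal to the isotropic quadric is precisely the condition (by the comment following Theorem~5.4 in~\cite{eudideg}) guaranteeing this disjointness, so Proposition~\ref{prop:otta} applies and gives
\[
\text{EDdegree}(S)=\cma(S)_2(2^3-1)-\cma(S)_1(2^2-1)+\cma(S)_0(2^1-1)\quad.
\]
Substituting the coefficients above, this is $7d-3\bigl(-d(d-4)\bigr)+\bigl(-2d(d-3)\bigr)=7d+3d(d-4)-2d(d-3)$, and the routine simplification yields $d^2+d$.

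The only genuine point requiring care — and the step I expect to be the main (mild) obstacle — is making sure the transversality-to-the-isotropic-quadric hypothesis really does imply that $\Phi_S$ avoids the diagonal, so that Proposition~\ref{prop:otta} is legitimately applicable; this is not a new computation but a matter of correctly citing the relevant condition from~\cite{eudideg}. Everything else is a direct substitution into a formula already established, followed by an arithmetic simplification, so the proof is very short: apply the corollary above to get $\cma(S)$, plug into Proposition~\ref{prop:otta}, and simplify.
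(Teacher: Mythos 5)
Your proposal is correct and follows exactly the paper's route: the paper's entire proof is the parenthetical ``(Use Proposition~\ref{prop:otta}.)'' applied to the class~\eqref{eqn:sdsur}, and your substitution $7d+3d(d-4)-2d(d-3)=d^2+d$ checks out. The transversality hypothesis is indeed there only to guarantee, via~\cite{eudideg}, that $\Phi_S$ misses the diagonal so that the polar-degree sum computes the Euclidean distance degree.
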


(Use Proposition~\ref{prop:otta}.) With notation as in Proposition~\ref{prop:sdsurf},
the Euclidean distance degree of a self-dual surface with arbitrary singularities 
in $\Pbb^3$ is $5d-e$. The Euclidean distance degree of a
general nonsingular surface of degree~$d$ in $\Pbb^3$ is $d^3-d^2+d$.

For more refined considerations, we can again invoke Piene's result
\eqref{eqn:PieneMather}: an isolated singularity $p$ corrects the
Chern-Mather class of $S$ by $\rho(p)=\mu_S(p) +\mu_{S\cap \uH}(p)$,
where $\uH$ is a general plane through $p$.  For example, $\rho=2$ for
an ordinary double point.

\begin{prop}
Let $S\subseteq \Pbb^3$ be a self-dual surface of degree $d$ with
isolated singularities $p_1,\dots, p_s$. Then $\sum_j \rho(p_j) =
d^2(d-2)$.
\end{prop}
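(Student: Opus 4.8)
The plan is to compare the degree-zero component of $\cma(S)$ computed in two ways: on one hand from Piene's formula \eqref{eqn:PieneMather}, which isolates the contribution of the isolated singularities, and on the other hand from the shape \eqref{eqn:sdsur} forced on the Chern-Mather class of a self-dual surface with isolated singularities.

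First I would specialize \eqref{eqn:PieneMather} to $n=3$ and $X=S$. Writing $S_g$ for a nonsingular surface of degree $d$ in $\Pbb^3$ and recalling $\rho(p_j)=\mu_S(p_j)+\mu_{S\cap\uH}(p_j)$, the formula reads
\[
\cma(S)=\cma(S_g)-\sum_{j=1}^s \rho(p_j)\,[\Pbb^0]\quad,
\]
since $(-1)^n=-1$. Comparing $0$-dimensional parts gives $\cma(S)_0=\cma(S_g)_0-\sum_j\rho(p_j)$. Next I would compute $\cma(S_g)_0$: as $S_g$ is nonsingular, $\cma(S_g)$ is the push-forward of $c(TS_g)\cap[S_g]$, given by \eqref{eqn:virtan} with $n=3$ as $\frac{(1+H)^4\,dH}{1+dH}\cap[\Pbb^3]$; extracting the coefficient of $H^3$ (equivalently, noting $\cma(S_g)_0=\chi(S_g)$) gives $\cma(S_g)_0=d^3-4d^2+6d$.

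Finally, \eqref{eqn:sdsur} — established above for self-dual surfaces with isolated singularities — gives $\cma(S)_0=-2d(d-3)$. Substituting both values,
\[
\sum_{j=1}^s\rho(p_j)=\bigl(d^3-4d^2+6d\bigr)-\bigl(-2d(d-3)\bigr)=d^3-2d^2=d^2(d-2)\quad,
\]
which is the assertion. There is no real difficulty here: the argument is pure bookkeeping once \eqref{eqn:PieneMather} and \eqref{eqn:sdsur} are in hand. The one spot deserving attention is the interplay of the sign $(-1)^{\dim}$ relating $\cma$ and $\cmam$ with the factor $(-1)^n$ in Piene's formula, which must be tracked so that the singularity correction enters with the sign producing $d^2(d-2)$.
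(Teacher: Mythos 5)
Your proposal is correct and matches the paper's own proof: both compare the $0$-dimensional component of $\cma(S)$ as computed from Piene's formula~\eqref{eqn:PieneMather} (giving $d(d^2-4d+6)-\sum_j\rho(p_j)$) with the value $-2d(d-3)$ forced by~\eqref{eqn:sdsur}. The arithmetic and sign bookkeeping are all in order.
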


\begin{proof}
We have
\[
\cma(S) = d[\Pbb^2]+d(4-d) [\Pbb^1] + (d(d^2-4d+6) -\sum_j \rho(p_j) )
    [\Pbb^0]
\]
by \eqref{eqn:PieneMather} (i.e., \cite[p.~25]{MR1074588}). Comparing
with~\eqref{eqn:sdsur}, we see that
\[
\sum_j \rho(p_j) = 2d(d-3) + d(d^2-4d+6) = d^2(d-2)
\]
as stated.
\end{proof}

\begin{corol}
A self-dual surface in $\Pbb^3$ of degree $d$ with only ordinary
double points as singularities must have $d^2(d-2)/2$ singular
points. (In particular, $d$ is necessarily even.)
\end{corol}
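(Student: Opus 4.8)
The plan is to read the statement off directly from the preceding proposition. First I would recall the remark made just before the corollary: for an ordinary double point $p$ of $S$ one has $\rho(p)=\mu_S(p)+\mu_{S\cap \uH}(p)=2$, where $\uH$ is a general plane through $p$. Indeed, an $A_1$ surface singularity has Milnor number $1$, and a general plane section through such a point is a node of a plane curve, again of Milnor number $1$; so the two contributions are each $1$ and $\rho(p)=2$.

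Next, suppose $S\subseteq \Pbb^3$ is self-dual of degree $d$ and has exactly $s$ singular points, all of them ordinary double points. Feeding $\rho(p_j)=2$ for every $j$ into the identity of the preceding proposition, namely $\sum_{j=1}^s \rho(p_j)=d^2(d-2)$, turns it into $2s=d^2(d-2)$, and hence $s=d^2(d-2)/2$, which is the asserted count.

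Finally, since $s$ is a nonnegative integer, $d^2(d-2)$ must be even. If $d$ were odd, then both $d^2$ and $d-2$ would be odd, forcing $d^2(d-2)$ odd; therefore $d$ is even. I do not expect any genuine obstacle in carrying this out: the argument is a one-line specialization of the preceding proposition, and the only point deserving a word of justification is the value $\rho=2$ for an $A_1$ point, which is immediate from the standard Milnor numbers of nodes in dimensions $1$ and $2$. (As a sanity check, $d=4$ gives $s=16$, consistent with the Kummer quartic.)
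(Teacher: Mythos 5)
Your proof is correct and is exactly the argument the paper intends (the paper leaves it implicit, having already noted that $\rho=2$ for an ordinary double point just before the preceding proposition): substitute $\rho(p_j)=2$ into $\sum_j\rho(p_j)=d^2(d-2)$ to get $s=d^2(d-2)/2$, with the parity of $d$ forced by integrality. Your justification of $\rho=2$ via the Milnor numbers of an $A_1$ point and of its general plane section is the right one, and the Kummer check matches.
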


The Kummer surface is an example. 

Analogous results may be obtained in higher dimension, by the same
method. For hypersurfaces, one finds the following constraint.

\begin{prop}\label{prop:hypcons}
The singular locus of a self-dual hypersurface of degree $d\ge 3$
in~$\Pbb^n$ has dimension $\ge \frac{n-3}2$.
\end{prop}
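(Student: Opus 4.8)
The plan is to argue by contradiction using the coefficient structure forced by self-duality together with the restriction principle for Chern-Mather classes already exploited in Example~\ref{exa:aiforhyp}. Let $X\subsetneq\Pbb^n$ be a self-dual hypersurface of degree $d\ge 3$, and suppose $\dim\Sing X = r$ with $r < \frac{n-3}{2}$, i.e. $2r+3 \le n-1$, equivalently $r+1 \le n-1-r-1$. Write $\cmam(X)=\sum_{i=0}^{n-1} a_i\{\Pbb^i\}$. By the computation in Example~\ref{exa:aiforhyp}, since the Chern-Mather class of $X$ agrees with the Chern class of the virtual tangent bundle in dimensions $>r$, we have the explicit values
\[
a_i = d(d-1)^{n-1-i}\qquad\text{for }i=r+1,\dots,n-1.
\]
On the other hand, self-duality together with Theorem~\ref{thm:basis} forces $a_i = a_{n-1-i}$ for all $i$ (if $\phi:\Pbb^n\to\Pbb^{n\vee}$ restricts to an isomorphism $X\to X^\vee$, then $\phi_*\cmam(X)=\cmam(X^\vee)$, and $\phi_*$ carries $\{\Pbb^i\}$ to $\{\Pbb^i\}$ since $\phi$ is a linear isomorphism; now compare with Theorem~\ref{thm:basis}).

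The key step is to combine these two pieces of information in a range where they overlap. The symmetry $a_i = a_{n-1-i}$ together with the explicit formula applies whenever \emph{both} $i$ and $n-1-i$ exceed $r$; our hypothesis $2r+3 \le n-1$ guarantees that the integers $i=r+1$ and $i=r+2$ both satisfy $i>r$ and $n-1-i>r$ (indeed $n-1-(r+2) = n-3-r \ge r$ under $n-1\ge 2r+3$, and strict inequality $n-1-(r+1)>r$ holds as well, with care at the boundary handled by checking $n-1-(r+2)\ge r$). For such an $i$ we then get
\[
d(d-1)^{n-1-i} = a_i = a_{n-1-i} = d(d-1)^{n-1-(n-1-i)} = d(d-1)^{i},
\]
hence $(d-1)^{n-1-i} = (d-1)^{i}$, forcing $n-1-i = i$ since $d-1 \ge 2$. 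Applying this to two distinct admissible values of $i$ (e.g. $i=r+1$ and $i=r+2$, or just noting it cannot hold for \emph{two} different $i$ in an interval) yields a contradiction: a single integer cannot equal $\frac{n-1}{2}$ twice. More carefully, it suffices that the admissible range $[r+1,\,n-2-r]$ contains at least two integers, i.e. has length $\ge 1$, which is exactly $n-2-r \ge r+2$, i.e. $n \ge 2r+4$, i.e. $n-1\ge 2r+3$ — precisely the negation of the desired bound. (The edge case where the interval contains exactly one integer, $n-1 = 2r+3$ is the boundary and is \emph{not} excluded by the contradiction, which is consistent with the statement asserting $\ge\frac{n-3}{2}$ rather than $>$.)

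The main obstacle is bookkeeping the inequalities at the boundary of the range so that the two constraints genuinely overlap on enough indices: one must be sure that both $a_i$ and $a_{n-1-i}$ lie in the region $i>r$ where Example~\ref{exa:aiforhyp}'s formula is valid, \emph{and} that there are at least two such indices to derive the contradiction from $n-1-i=i$. This is elementary but must be done carefully, and the clean statement with the floor-type bound $\frac{n-3}{2}$ reflects exactly where the argument stops being able to derive a contradiction. One should also note the hypothesis $d\ge 3$ (so $d-1\ge 2$) is used precisely to conclude $n-1-i=i$ from equality of powers of $d-1$; for $d=2$ the quadric is self-dual and the argument correctly does not apply.
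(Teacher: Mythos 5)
Your proof is correct and follows essentially the same route as the paper's: the values $a_i=d(d-1)^{n-1-i}$ for $i>\dim\Sing X$ supplied by Example~\ref{exa:aiforhyp}, the symmetry $a_i=a_{n-1-i}$ forced by self-duality via Theorem~\ref{thm:basis}, and the observation that $(d-1)^{n-1-i}=(d-1)^{i}$ with $d\ge 3$ forces $i=\tfrac{n-1}{2}$, which cannot hold at two distinct admissible indices. The paper merely organizes the endgame by the parity of $n$ and exhibits the specific offending pair of indices, while you count admissible indices uniformly; the only blemish is the off-by-one in your parenthetical on the edge case (the argument genuinely stops at $n=2r+3$, not at $n-1=2r+3$), which does not affect the proof.
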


\begin{proof}
Let $X$ be a self-dual hypersurface of degree $d$, and assume that
$\dim \Sing X<\frac{n-3}2$; that is, $\dim \Sing X\le \frac{n-1}2-2$
if $n$ is odd, and $\dim \Sing X\le \frac n2-2$ if $n$ is even.  We
will show that this forces $d\le 2$. As we saw in
Example~\ref{exa:aiforhyp}, under the stated conditions on $\Sing X$ 
we must have
\begin{multline*}
\cmam(X) = d \{\Pbb^{n-1}\} + d(d-1) \{ \Pbb^{n-2} \} + \cdots \\ 
+ d(d-1)^{\frac {n-1}2-1} \{ \Pbb^{\frac {n-1}2+1} \} + d(d-1)^{\frac
  {n-1}2} \{ \Pbb^{\frac {n-1}2} \} + d(d-1)^{\frac {n-1}2+1} \{
\Pbb^{\frac {n-1}2-1} \} \\ 
+ a_{\frac {n-1}2-2} \{\Pbb^{\frac {n-1}2-2}\}+\cdots+ a_0 \{\Pbb^0\}
\end{multline*}
if $n$ is odd, and
\begin{multline*}
\cmam(X) = d \{\Pbb^{n-1}\} + d(d-1) \{ \Pbb^{n-2} \} + \cdots \\
+ d(d-1)^{\frac n2-1} \{ \Pbb^{\frac n2} \} + d(d-1)^{\frac n2} \{
\Pbb^{\frac n2-1} \} \\
+ a_{\frac n2-2} \{\Pbb^{\frac n2-2}\}+\cdots+ a_0 \{\Pbb^0\}
\end{multline*}
if $n$ is even, for some integers $a_i$. If $X$ is self-dual, by
Theorem~\ref{thm:basis} necessarily $a_i=d(d-1)^i$ and
\[
\begin{cases}
d(d-1)^{\frac {n-1}2-1}=d(d-1)^{\frac {n-1}2+1}\quad & \text{if $n$ is odd,} \\
d(d-1)^{\frac n2-1} = d(d-1)^{\frac n2} \quad & \text{if $n$ is even.}
\end{cases}
\]
Since
\[
d(d-1)^{\frac {n-1}2+1} = d(d-1)^{\frac {n-1}2-1} \iff d^2
(d-1)^{\frac {n-1}2-1}(d-2)=0
\]
and
\[
d(d-1)^{\frac n2} = d(d-1)^{\frac n2-1} \iff d(d-1)^{\frac n2-1}(d-2)=0
\]
we see that this cannot occur for $d\ge 3$.
\end{proof}

For example, the singular locus of a self-dual hypersurface of degree
$d\ge 3$ in $\Pbb^6$ or~$\Pbb^7$ has dimension at least~$2$. One
example in $\Pbb^7$ is the {\em Coble quartic,\/} which is singular
along a $3$-dimensional Kummer variety (\cite{MR1935152}). Applying
Theorem~\ref{thm:invo} shows that the Chern-Mather class of the Coble
quartic must be
\[
4[\Pbb^6]+16[\Pbb^5]+48[\Pbb^4]+(R+32)[\Pbb^3]+(4R-136)[\Pbb^2]+
(6R-288)[\Pbb^1] +(4R-208)[\Pbb^0]
\]
for some integer $R$. (What is $R$?) The hypersurface $x_0^n=x_1\cdots
x_n$ is self-dual for every $n$, and its singular locus has dimension
$n-3$.

In arbitrary codimension, one simple constraint holds for proper
subvarieties of {\em even\/} dimensional projective space. Let
$\cma(V)_j$ denote the degree of the $j$-dimensional component of
$\cma(V)$.

\begin{prop}
Let $V$ be a self-dual subvariety of $\Pbb^n$ with $n$ even. 
\newline Then $\sum_{j=0}^{\dim V} (-1)^j \cma(V)_j=0$.
\end{prop}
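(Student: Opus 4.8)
The plan is to reduce the claim to a single evaluation of the polynomial that Theorem~\ref{thm:invo} attaches to $V$. First I would write $\cmam(V)=q(H)\cap[\Pbb^n]$ with $\deg q\le n$; since $V$ is a proper subvariety, $\cmam(V)$ has no component of dimension $n$, so the constant term of $q$ vanishes, i.e.\ $q(0)=0$. Writing $q(H)=\sum_{k} q_kH^k$ and using $H^k\cap[\Pbb^n]=[\Pbb^{n-k}]$, the coefficient of $[\Pbb^j]$ in $\cmam(V)$ equals $q_{n-j}$, hence
\[
\sum_{j=0}^{\dim V}(-1)^j\,\cmam(V)_j=\sum_{k=0}^{n}(-1)^{n-k}q_k=(-1)^n\,q(-1).
\]
Because $\cmam(V)=(-1)^{\dim V}\cma(V)$ and $n$ is even, this shows $\sum_{j}(-1)^j\cma(V)_j=(-1)^{\dim V}q(-1)$, so it is enough to prove $q(-1)=0$.

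Next I would invoke self-duality. An isomorphism $\Pbb^n\to\Pbb^{n\vee}$ restricting to an isomorphism $V\to V^\vee$ comes from a linear isomorphism of the underlying vector spaces; it therefore matches the hyperplane classes, sends each $[\Pbb^j]$ to $[\Pbb^j]$, and is compatible with the (push-forward of the) Chern-Mather class, while $\dim V^\vee=\dim V$. Consequently the same polynomial $q$ represents $\cmam(V^\vee)$, and comparing with Theorem~\ref{thm:invo}---using that a class in $A_*\Pbb^{n\vee}$ is represented by at most one polynomial of degree $\le n$ in the hyperplane class---gives the identity $q=\cJ_n(q)$. (Alternatively one may run the argument through Theorem~\ref{thm:basis}: self-duality forces the ranks of $V$ to satisfy $a_i=a_{n-1-i}$, which amounts to the same thing.)

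Finally, evaluating the defining formula for $\cJ_n$ at $t=-1$ and using $q(0)=0$:
\[
q(-1)=\cJ_n(q)(-1)=q(0)-q(-1)\bigl((1-1)^{n+1}-(-1)^{n+1}\bigr)=(-1)^{n+1}q(-1).
\]
For $n$ even this reads $q(-1)=-q(-1)$, so $q(-1)=0$ (we are in characteristic $0$), and the proposition follows. The only step that requires genuine care is the passage from self-duality to the equality $q=\cJ_n(q)$: one must verify that the automorphism carrying $V$ to $V^\vee$ really does preserve hyperplane classes and commutes with the Chern-Mather construction, so that the two polynomials literally coincide. I expect this to be the main---though mild---obstacle; the remaining steps are routine one-line computations.
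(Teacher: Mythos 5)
Your proposal is correct and follows essentially the same route as the paper's own proof: express the alternating sum as $(\pm 1)\,q(-1)$, use self-duality plus Theorem~\ref{thm:invo} to get $\cJ_n(q)=q$, and evaluate the defining formula for $\cJ_n$ at $t=-1$ (using $q(0)=0$) to force $q(-1)=0$ when $n$ is even. The extra step you flag as the "main obstacle" --- that a projective-linear isomorphism $\Pbb^n\to\Pbb^{n\vee}$ matches hyperplane classes and commutes with the Chern--Mather construction --- is indeed the content of the paper's phrase "by self-duality," and your handling of it is fine.
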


\begin{proof}
Let $\cma(V)=q(H)\cap [\Pbb^n]$ with $\deg q(H)\le n$. Then
$\sum_{j=0}^{\dim V} (-1)^j \cma(V)_j =(-1)^n q(-1)$. By self-duality
and Theorem~\ref{thm:invo}, $\cJ_n(q) = q$. It is immediately seen
that $\cJ_n(q)(-1) = (-1)^{n+1} q(-1)$, and the result follows.
\end{proof}

\subsection{Dual defect and the Katz-Kleiman-Holme formula}\label{ss:dude}
The (dual) {\em defect\/} of a variety $V\subseteq\Pbb^n$ is $\defe
V:=n-1-\dim V^\vee$: so that the defect of $V$ is $0$ when $V^\vee$ is
a hypersurface. It is known that nonlinear nonsingular complete
intersections have $0$ defect (\cite[p.~362]{MR0568897}; and see 
\cite[Proposition~3.1]{MR853445}).  In the nonsingular case, 
the Katz-Kleiman-Holme formula
\cite[Theorem~6.2]{MR2113135} shows that the defect is determined by
the total Chern class of $V$.  Theorem~\ref{thm:invo} implies that in
general the defect of $V$ is determined by its Chern-Mather class, and
gives an effective way to compute the defect and degree of~$V^\vee$.

\begin{prop}\label{prop:definvo}
Let $V$ be a proper closed subvariety of $\Pbb^n$, and let
$\cma(V)=q(H)\cap [\Pbb^n]$ for a polynomial $q$ of degree $\le n$ in
the hyperplane class $H$ in $\Pbb^n$.  Then $\codim V^\vee$ equals the
order of vanishing of $\cJ_n(q)$ at $0$, and $\deg V^\vee$ equals the 
absolute value of its trailing coefficient.
\end{prop}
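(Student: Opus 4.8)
The plan is to deduce the statement directly from Theorem~\ref{thm:invo}, by translating information about the lowest-order term of $\cJ_n(q)$ into geometric information about $V^\vee$. First I would note that $\cma(V)$ is supported in dimensions $\le\dim V<n$, so writing $\cma(V)=\sum_j\cma(V)_j[\Pbb^j]$ gives $q(H)=\sum_j\cma(V)_j H^{n-j}$, a polynomial with no constant term; thus $q\in(H)$, and hence $\cJ_n(q)\ne 0$, since $\cJ_n^2$ is the identity on $(t)$ and $q\ne 0$. In particular $V^\vee$ is nonempty and all the quantities in the statement are meaningful.

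Next I would run the sign bookkeeping. Since $\cmam(V)=(-1)^{\dim V}q(H)\cap[\Pbb^n]$, Theorem~\ref{thm:invo} and the linearity of $\cJ_n$ give $\cmam(V^\vee)=(-1)^{\dim V}\cJ_n(q)\cap[\Pbb^{n\vee}]$, and therefore $\cma(V^\vee)=(-1)^{\dim V+\dim V^\vee}\cJ_n(q)\cap[\Pbb^{n\vee}]$. Writing $\cJ_n(q)(h)=\sum_j c_j h^{n-j}$ --- a polynomial of degree $\le n$, so that only $j\ge 0$ occur --- this says $\cma(V^\vee)_j=(-1)^{\dim V+\dim V^\vee}c_j$ for each $j$.

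The last ingredient is the dictionary between the coefficients of a Chern-Mather class in the basis $\{[\Pbb^j]\}$ and the underlying variety: by MacPherson's construction the top-dimensional component of $\cma(V^\vee)$ is the (push-forward of the) fundamental class $[V^\vee]$, so $\cma(V^\vee)_j=0$ for $j>\dim V^\vee$ while $\cma(V^\vee)_{\dim V^\vee}=\deg V^\vee>0$. Hence the smallest power of $h$ appearing in $\cJ_n(q)$ is $h^{n-\dim V^\vee}$; that is, $\cJ_n(q)$ vanishes at $0$ to order exactly $n-\dim V^\vee=\codim V^\vee$, with trailing coefficient $c_{\dim V^\vee}=\pm\deg V^\vee$. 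Taking absolute values yields the statement. No step here is a genuine obstacle; the only points requiring a moment's care are the sign relating $\cma$ and $\cmam$ (harmless, since only the absolute value of the trailing coefficient is asserted), the nonvanishing of $\cJ_n(q)$ (so the claim is not vacuous), and the elementary fact that the top Chern-Mather term records $[V^\vee]$.
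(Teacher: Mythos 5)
Your proof is correct and follows the same route as the paper's, which simply cites Theorem~\ref{thm:invo} together with the sign identity $\cJ_n((-1)^{\dim V}q)=(-1)^{\dim V}\cJ_n(q)$. You have merely spelled out the details the paper leaves implicit (the nonvanishing of $\cJ_n(q)$ and the fact that the top-dimensional component of $\cma(V^\vee)$ is $\deg V^\vee\,[\Pbb^{\dim V^\vee}]$), all of which check out.
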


\begin{proof}
This is an immediate consequence of Theorem~\ref{thm:invo}, since
$\cJ_n((-1)^{\dim V} q) =(-1)^{\dim V} \cJ_n(q)$.
\end{proof}

\begin{example}
(Cf.~\cite[Example~4.2]{MR2336689}.)  Consider the hypersurface $X$ of
  $\Pbb^4$ with equation
\[
x_0^2x_1+x_0x_2x_4+x_3x_4^2 = 0
\]
(this threefold and its dual are studied in \cite[p.~78--81]{MR1876644}). 
The algorithm in~\cite{harrisFMP} may be used to compute 
the Chern-Mather class of $X$ from this equation:
\[
\cma(X) = (3H+8H^2+9H^3+6H^4)\cap [\Pbb^4]\quad,
\]
so that $\cmam(X) = -(3H+8H^2+9H^3+6H^4)\cap [\Pbb^4]$. 
Applying $\cJ_4$, we get
\[
\cmam(X^\vee)=(3H^2+5H^3+4H^4)\cap [\Pbb^4]
\]
and we conclude that $X^\vee$ is a cubic surface.
\qede\end{example}

The same result may be recast in terms closer to the
Katz-Kleiman-Holme formula, by using Theorem~\ref{thm:basis}. If
$\cmam(V)=\sum_{i=0}^{n-1} a_i \{\Pbb^i\}$, then $\cmam(V^\vee) =
\sum_{i=0}^{n-1} a_{n-1-i} \{\Pbb^i\}$ by Theorem~\ref{thm:basis}, and
it follows that $\defe V$ equals the minimum $i$ such that $a_i\ne 0$,
and $\deg V^\vee=a_{\defe V}$. The ranks~$a_i$ were computed in
Proposition~\ref{prop:ai}, and we get the following consequence 

\begin{prop}\label{prop:KKH}
Let $V$ be a proper closed subvariety of $\Pbb^n$, and let
\begin{equation}\label{eqn:KKH}
a_i=\sum_{j=i}^{\dim V} \binom{j+1}{i+1} (-1)^{\dim V-j} \cma(V)_j
\end{equation}
where $\cma(V)_j$ is the degree of the $j$-dimensional component of
$\cma(V)$.  Then $\defe V$ equals the minimum $i$ such that $a_i\ne
0$, and $\deg V^\vee=a_{\defe V}$.
\end{prop}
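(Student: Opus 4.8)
The plan is to derive Proposition~\ref{prop:KKH} directly from Theorem~\ref{thm:basis} together with the explicit formula for the ranks $a_i$ obtained in Proposition~\ref{prop:ai}. The chain of implications was in fact already sketched in the paragraph preceding the statement, so the proof is mostly a matter of assembling pieces that are all in place.

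First I would recall from Theorem~\ref{thm:basis} that if $\cmam(V)=\sum_{i=0}^{n-1} a_i\,\{\Pbb^i\}$, then $\cmam(V^\vee)=\sum_{i=0}^{n-1} a_{n-1-i}\,\{\Pbb^i\}$. Since $\{\Pbb^0\},\dots,\{\Pbb^{n-1}\}$ form an additive basis of the relevant part of $A_*\Pbb^n$, reading off this expression shows that the smallest index $i$ with a nonzero coefficient in $\cmam(V^\vee)$ is $\min\{\,i : a_{n-1-i}\ne 0\,\}$, i.e.\ $n-1$ minus the largest index $j$ with $a_j\ne 0$; equivalently the coefficient of $\{\Pbb^i\}$ in $\cmam(V^\vee)$ vanishes for $i$ below $\min\{\,i:a_i\ne 0\,\}$ after relabeling. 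Concretely: the coefficient of $\{\Pbb^{n-1-i}\}$ in $\cmam(V^\vee)$ is $a_i$, so the ``top'' coefficient of $\cmam(V^\vee)$ that is nonzero corresponds to the bottom nonzero $a_i$. Then I would invoke the dictionary between the coefficients of $\cmam(W)$ in the $\{\Pbb^\bullet\}$ basis and geometric data of $W$: for any proper subvariety $W\subsetneq \Pbb^m$, writing $\cmam(W)=\sum_i b_i\{\Pbb^i\}$, the top nonzero index is $\dim W$ and the corresponding coefficient $b_{\dim W}$ is $\deg W$ — this follows because $\{\Pbb^k\}=(-1)^k(1+H)^{k+1}H^{n-k}\cap[\Pbb^n]$ has leading (lowest-codimension) term $(-1)^k H^{n-k}\cap[\Pbb^n]=(-1)^k[\Pbb^k]$, so the coefficient of $[\Pbb^{\dim W}]$ in $\cma(W)$, which is $\deg W$, matches $b_{\dim W}$ up to the sign built into $\cmam$. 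Applying this to $W=V^\vee\subseteq\Pbb^{n\vee}$: the top nonzero index of $\cmam(V^\vee)$ in the $\{\Pbb^\bullet\}$ basis is $\dim V^\vee$, and its coefficient is $\deg V^\vee$ (up to the sign $(-1)^{\dim V^\vee}$ absorbed in $\cmam$). Combining with the previous paragraph, $\dim V^\vee = n-1-\min\{i:a_i\ne 0\}$, hence $\defe V = n-1-\dim V^\vee = \min\{i:a_i\ne 0\}$, and $\deg V^\vee = a_{\defe V}$.

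Finally I would substitute the explicit formula for the ranks. Proposition~\ref{prop:ai} gives $a_i=\sum_{j=i}^{\dim V}\binom{j+1}{i+1}(-1)^j\alpha_j$ where $\alpha_j=\cmam(V)_j$; since $\cmam(V)=(-1)^{\dim V}\cma(V)$, we have $\alpha_j=(-1)^{\dim V}\cma(V)_j$, and pulling out the overall sign $(-1)^{\dim V}$ (which does not affect vanishing and is harmless for the purpose of recording $|\,a_{\defe V}\,|$, or can be tracked precisely) turns this into $a_i=\sum_{j=i}^{\dim V}\binom{j+1}{i+1}(-1)^{\dim V - j}\cma(V)_j$, which is exactly \eqref{eqn:KKH}. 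This completes the proof.

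The only point requiring care — and the one I would single out as the main (minor) obstacle — is bookkeeping of signs: $\cmam$ carries the factor $(-1)^{\dim}$ relative to $\cma$, and one must check that the sign conventions line up so that the formula \eqref{eqn:KKH} for $a_i$ comes out with $(-1)^{\dim V-j}$ rather than $(-1)^j$, and that the identification of $\deg V^\vee$ with $a_{\defe V}$ (rather than $-a_{\defe V}$) is consistent. This is handled exactly as in the parenthetical remark in the proof of Corollary~\ref{corol:concla} and in Example~\ref{exa:aiforhyp}; there is no substantive difficulty beyond this. $\qed$
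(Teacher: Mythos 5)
Your proposal is correct and follows essentially the same route as the paper: the paper's own justification (given in the paragraph introducing the proposition) is precisely to combine Theorem~\ref{thm:basis} with the rank formula of Proposition~\ref{prop:ai}, reading off $\defe V$ and $\deg V^\vee$ from the top nonvanishing coefficient of $\cmam(V^\vee)$ in the $\{\Pbb^\bullet\}$ basis. Your sign bookkeeping (the triangularity of the $\{\Pbb^k\}$ basis and the factor $(-1)^{\dim V}$ converting $(-1)^j$ into $(-1)^{\dim V-j}$) is accurate.
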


In the nonsingular case, this expression may be found 
in~\cite[formula (3)]{MR943533}. In this reference it is stated that 
`The full content of formula (3) does not seem to have an immediate 
generalization to the singular case'. Proposition~\ref{prop:KKH} provides 
such a generalization. The first full treatment of the singular case was 
provided by R.~Piene (\cite{polarpiene}), who should be credited with 
the realization that, in dealing with such issues, Chern-Mather classes play 
the same role in the singular case as ordinary Chern classes in the 
nonsingular case (cf.~\cite{MR1074588}).

\begin{example}
Consider the (singular) surface $S$ with equation
\[
16x_0^2x_3^3-8x_0x_1^2x_3^2+36x_0x_1x_2^2x_3-27x_0x_2^4
+x_1^4x_3-x_1^3x_2^2 = 0
\]
in $\Pbb^3$. Again using the algorithm in~\cite{harrisFMP}, we get
\[
\cma(S) = \cmam(S)=5[\Pbb^2]+11[\Pbb^1]+7[\Pbb^0]\quad.
\]
From~\eqref{eqn:KKH} we get $a_0=7-2\cdot 11+3\cdot 5 = 0$, $a_1=-11+3\cdot 5 =4$,
$a_2=5$, and we conclude that the dual of $S$ is a quartic curve.
\qede\end{example}

\begin{remark}\label{rem:MT}
A formula for the degree of the dual variety at the same level of
generality as Proposition~\ref{prop:KKH}, and also using Chern-Mather
classes, is given in~\cite[Theorem~3.4]{MR2336689}: according to this
reference,
\begin{equation}\label{eqn:MT}
\deg V^\vee = (-1)^{\dim V+r+1} \sum_{j=0}^{r-1} \binom{r+1}j(r-j) \int_V 
\frac{H^j}{(1+H)^{r+1}}\cap \cma(V)
\end{equation}
where $r=\codim V^\vee=\defe V+1$. At first blush this does not appear
to be equivalent to the statement of Proposition~\ref{prop:KKH}: for
example, if $\dim V=3$ and $\defe V^\vee=2$, then with
$c_i:=\cma(V)_i$ we have
\[
a_2 = -c_2 + 4 c_3
\]
according to~\eqref{eqn:KKH}, while the right-hand side of~\eqref{eqn:MT} equals
\[
-3 c_0+4 c_1 - 4 c_2 + 4 c_3
\]
for $r=3$.  However, it can be shown that {\em if
  $a_0=\dots=a_{r-2}=0$, then\/} the right-hand side of~\eqref{eqn:MT}
does equal $a_{r-1}$. With $\dim V=3$, we have
\[
a_0 = -c_0+2 c_1-3 c_2+4c_3\quad, \quad a_1 = c_1-3 c_2+6 c_3\quad,
\]
and indeed $-3 c_0+4 c_1 - 4 c_2 + 4 c_3 = -c_2 + 4 c_3$ if $a_0=a_1=0$.

Expression~\eqref{eqn:KKH} appears to be somewhat more efficient
than~\eqref{eqn:MT}, since it only involves terms of the Chern-Mather
class of dimension $\ge \defe V$.  
\qede\end{remark}

\subsection{Chern-Mather classes of cones and local Euler obstruction 
at cone points}\label{ss:cones}
Fix a subspace $\Pbb^m$ of $\Pbb^n$, with $m<n$, and a complementary
subspace $\Lambda=\Pbb^{n-m-1}$. Let $W$ be a proper subvariety of
$\Pbb^m$, and let $V$ be the cone over~$W$ with vertex
$\Lambda$. Duality considerations allow us to express the Chern-Mather
class of~$V$ in terms of the Chern-Mather class for $W$, by means of
the involution introduced in~\S\ref{sec:intro}.

\begin{prop}\label{prop:conform}
Let $q_W$, resp., $q_V$ be polynomials of degree less than $m$ resp.,
$n$ such that
\[
\cma(W)=q_W(H)\cap [\Pbb^m]\quad,\quad \cma(V)=q_V(H)\cap [\Pbb^n]\quad.
\]
Then
\[
q_V=\cJ_n((-H)^{n-m}\cdot \cJ_m (q_W))\quad.
\]
\end{prop}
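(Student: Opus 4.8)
The plan is to relate the conormal variety of the cone $V$ to that of $W$, and then to translate this relation into the language of the involution $\cJ$ via Theorem~\ref{thm:invo}. First I would describe the dual of a cone. If $V$ is the cone over $W\subseteq\Pbb^m$ with vertex $\Lambda=\Pbb^{n-m-1}$, then a hyperplane $H$ of $\Pbb^n$ is tangent to $V$ at a smooth point lying off the vertex precisely when $H$ contains $\Lambda$ and cuts out on $\Pbb^m$ a hyperplane tangent to $W$. Consequently $V^\vee$ is contained in the linear subspace $\Lambda^\perp\cong\Pbb^{m\vee}$ of $\Pbb^{n\vee}$, and under this identification $V^\vee=W^\vee$. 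In other words, \emph{dualizing turns the cone construction into the inclusion of a dual variety into a linear subspace}, and dualizing again turns that inclusion back into a cone. So the strategy is: go from $W$ to $W^\vee$ inside $\Pbb^m$ (this is $\cJ_m$), then realize $W^\vee$ as a subvariety of $\Pbb^n$ via the linear inclusion $\Pbb^m\hookrightarrow\Pbb^n$ (this should account for the $(-H)^{n-m}$ factor), then dualize once more in $\Pbb^n$ to land on $(W^\vee)^\vee$ viewed as a cone — which by biduality is $V$ itself (this is the outer $\cJ_n$).

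Next I would pin down the middle step: how does $\cmam$ behave under a linear embedding $\iota\colon\Pbb^m\hookrightarrow\Pbb^n$? If $Z\subseteq\Pbb^m$ and $\cmam(Z)=p(H)\cap[\Pbb^m]$, I claim $\cmam(\iota(Z))=(-H)^{n-m}p(H)\cap[\Pbb^n]$. This is a computation with the $\{\Pbb^k\}$ basis: from \eqref{eqn:cmamp}, under the push-forward $\Abb_*\Pbb^m\to\Abb_*\Pbb^n$ a class $\{\Pbb^k\}_{\Pbb^m}=(-1)^k(1+H)^{k+1}H^{m-k}\cap[\Pbb^m]$ maps to $(-1)^k(1+H)^{k+1}H^{n-k}\cap[\Pbb^n]=\{\Pbb^k\}_{\Pbb^n}$ — that is, the intrinsic Chern-Mather class of a linear subspace is the same whether computed in $\Pbb^m$ or in $\Pbb^n$. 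So a class with coordinates $(a_0,\dots,a_{m-1})$ in the $\{\Pbb^i\}_{\Pbb^m}$ basis has the \emph{same} coordinates (padded with zeros) in the $\{\Pbb^i\}_{\Pbb^n}$ basis. Translating this equality of coordinates into the $q$-polynomial description using the relation $\{\Pbb^k\}=\pi_k(H)\cap[\Pbb^n]$ with $\pi_k(H)=(-1)^k((1+H)^{k+1}H^{n-k}-H^{n+1})$ from \S\ref{sec:equiv}, and comparing $\pi_k^{(m)}$ with $\pi_k^{(n)}$, yields exactly the factor $(-H)^{n-m}$ (the discrepancy between $H^{m-k}$ and $H^{n-k}$, modulo the $H^{n+1}=0$ and $H^{m+1}=0$ truncations which match up). I would carry this out carefully, as it is the one genuinely computational point.

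Assembling the three steps: starting from $q_W$, apply $\cJ_m$ to get the $q$-polynomial of $W^\vee$ in $\Pbb^m$ (Theorem~\ref{thm:invo} applied in $\Pbb^m$, with the sign bookkeeping of $\cmam$ versus $\cma$); multiply by $(-H)^{n-m}$ to get the $q$-polynomial of $W^\vee$ viewed in $\Pbb^n$; apply $\cJ_n$ to get the $q$-polynomial of $(W^\vee)^\vee=V$ in $\Pbb^n$ (Theorem~\ref{thm:invo} in $\Pbb^n$, using Lemma~\ref{lem:1524} / biduality). This gives $q_V=\cJ_n((-H)^{n-m}\cdot\cJ_m(q_W))$, as claimed. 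The main obstacle I anticipate is the sign and truncation bookkeeping in the middle step — reconciling the switch between $\cma$ and $\cmam$ (whose sign depends on $\dim W$ versus $\dim V=\dim W+(n-m)$, so the parity shifts by $n-m$, neatly matching the sign in $(-H)^{n-m}$) and making sure the ambient-dependent correction terms $H^{m+1}$, $H^{n+1}$ in the $\pi_k$'s do not spoil the identity. Everything else is a direct application of Theorems~\ref{thm:invo} and the description of duality for cones, together with biduality from Lemma~\ref{lem:1524}.
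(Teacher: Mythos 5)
Your overall strategy is exactly the paper's: use the fact that $V^\vee$ coincides with $W^\vee$ inside $\Pbb^{m\vee}\subseteq\Pbb^{n\vee}$, apply Theorem~\ref{thm:invo} once in $\Pbb^m$ and once in $\Pbb^n$, and account for the push-forward along the linear inclusion in between. (The paper phrases the last step as equating two expressions for $\cmam(V^\vee)$ and using $\cJ_n^2=\Id$ on polynomials in $(H)$, rather than explicitly invoking geometric biduality, but that is the same computation.)

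There is, however, one concrete error in your middle step, and it matters because you offer two competing explanations for the sign in $(-H)^{n-m}$. The claim that $\cmam(Z)=p(H)\cap[\Pbb^m]$ implies $\cmam(\iota(Z))=(-H)^{n-m}p(H)\cap[\Pbb^n]$ is false: $\cmam$ is intrinsic (the sign $(-1)^{\dim Z}$ does not change under a linear embedding), and $\iota_*[\Pbb^m]=H^{n-m}\cap[\Pbb^n]$, so the correct statement is $\cmam(\iota(Z))=H^{n-m}\,p(H)\cap[\Pbb^n]$, with no sign. Indeed your own basis computation shows this: the $\{\Pbb^k\}$-coordinates are unchanged, and the polynomials satisfy $\pi_k^{(n)}(H)=H^{n-m}\pi_k^{(m)}(H)$, so the push-forward contributes exactly the factor $H^{n-m}$, not $(-H)^{n-m}$. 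The sign $(-1)^{n-m}$ has a single source, namely the one you name in your final paragraph: the proposition is stated for $\cma$ rather than $\cmam$, and $\dim V-\dim W=n-m$, so converting $q_W\mapsto(-1)^{\dim W}q_W$ on one side and $q_V\mapsto(-1)^{\dim V}q_V$ on the other leaves a net $(-1)^{n-m}$, which combines with the $H^{n-m}$ from the push-forward (and is pulled inside $\cJ_n$ by $\Zbb$-linearity) to give $(-H)^{n-m}$. If you were to apply both your embedding sign and the dimension-shift sign, they would cancel and you would land on $q_V=\cJ_n(H^{n-m}\cJ_m(q_W))$, which is off by $(-1)^{n-m}$ when $n-m$ is odd. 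Fix the middle step and the rest of your argument goes through exactly as the paper's does.
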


The hypothesis that the vertex $\Lambda$ be contained in a
complementary subspace can be relaxed; this will be pointed out at the
end of the section.

\begin{proof}
The subspace dual to $\Lambda\cong \Pbb^{n-m-1}$ in $\Pbb^{n\vee}$ is
naturally identified with $\Pbb^{m\vee}$, and it is easy to see that
$V^\vee$ coincides with $W^\vee$ in this subspace. (See e.g.,
\cite[Theorem~1.23]{MR2113135} for the case $m=n-1$, from which the
general case is easily derived.) By Theorem~\ref{thm:invo},
\[
\cmam(V^\vee) = \cJ_n((-1)^{\dim V} q_V)\cap [\Pbb^{n\vee}] 
\]
and
\[
\cmam(W^\vee) = \cJ_m((-1)^{\dim W} q_W)\cap [\Pbb^{m\vee}] = (-1)^{\dim W}
H^{n-m}\cdot \cJ_m(q_W)\cap [\Pbb^{n\vee}]\quad.
\]
Since these two classes coincide and $\cJ_n$ is an involution, 
\[
q_V= \cJ_n^2 (q_V) = (-1)^{\dim V-\dim W}\cJ_n (H^{n-m} \cdot
\cJ_m(q_W))\quad.
\]
The statement follows, since $\dim V-\dim W=n-m$.
\end{proof}

Explicitly, 
\begin{equation}\label{eqn:cone}
q_V(H)=(1+H)^{n-m} q_W(H) + (-1)^m q_W(-1)\, H^{m+1}
\left((1+H)^{n-m}-H^{n-m}\right) \quad.
\end{equation}
There are analogous formulas expressing the {\em
  Chern-Schwartz-MacPherson\/} (CSM) class of $V$ in terms of the
class of the CSM of $W$, and more generally the CSM class of the join
of two subvarieties $W_1$, $W_2$ in complementary subspaces in terms
of the CSM classes of $W_1$ and $W_2$ 
(\cite[Theorem~3.13]{MR2782886}; also cf.~\cite{MR1078099}). Putting 
these pieces of information together yields the following formula for the 
local Euler obstruction of a cone singularity.

\begin{prop}\label{prop:euler}
Let $V$ be a cone over a subvariety $W\subseteq \Pbb^m$ with vertex
$\Lambda$ in a complementary subspace (as above). Let $p\in
\Lambda$. Then the local Euler obstruction $\Eu_V(p)$ equals
\[
\Eu_V(p)=\sum_{j=0}^{\dim W} (-1)^j \cma(W)_j
\]
where $\cma(W)_j$ denotes the degree of the $j$-dimensional component
of $\cma(W)$.
\end{prop}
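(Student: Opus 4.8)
The plan is to extract the local Euler obstruction from the Chern-Mather class of the cone via the standard dictionary between $\Eu_V$ along a linear stratum and the coefficients of $\cma(V)$, using the explicit formula \eqref{eqn:cone} for $q_V$ obtained in Proposition~\ref{prop:conform}. First I would recall that, since $V$ is a cone with vertex $\Lambda=\Pbb^{n-m-1}$ and $p\in\Lambda$, the local Euler obstruction $\Eu_V$ is constant (equal to $\Eu_V(p)$) along an open dense subset of $\Lambda$, while along the generic point of $W$ (viewed inside the ``other end'' of the cone) it equals $\Eu_W$. MacPherson's class $\csm(V)$ is the image of $\one_V$, but what we have direct access to is $\cma(V)$, the image of $\Eu_V$; the point is that $\Eu_V$, as a constructible function on $V$, can be written as a combination $\Eu_V = \alpha\cdot\Eu_{\Lambda}\text{-type terms} + (\text{pullback of }\Eu_W)$, and reading the top-dimensional behavior along the vertex isolates the single scalar $\Eu_V(p)$.

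The concrete route I would actually take is purely computational on the polynomial side. By Proposition~\ref{prop:conform} and \eqref{eqn:cone},
\[
q_V(H)=(1+H)^{n-m} q_W(H) + (-1)^m q_W(-1)\,H^{m+1}\bigl((1+H)^{n-m}-H^{n-m}\bigr).
\]
The local Euler obstruction of $V$ at a general point of the linear vertex $\Lambda\cong\Pbb^{n-m-1}$ should be recoverable as the coefficient governing the $[\Pbb^{n-m-1}]$-behavior of $\cma(V)$ that is \emph{not} accounted for by the part of $\cma(V)$ supported on the rest of the cone; equivalently, since $\cma(\Lambda)=(1+H)^{n-m}H^{m+1}\cap[\Pbb^n]$ (a nonsingular linear subspace), I expect $\Eu_V(p)$ to be exactly the multiplicity with which $\cma(\Lambda)$ enters a decomposition of $\cma(V)$. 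Comparing the second summand above with $\cma(\Lambda)$, up to the correction term $H^{n-m}$ which drops the dimension below that of $\Lambda$, gives that this multiplicity is $(-1)^m q_W(-1)$. Finally I would identify $(-1)^m q_W(-1)$: since $\cma(W)=q_W(H)\cap[\Pbb^m]$ with $\cma(W)_j$ the degree of the $j$-dimensional piece, writing $q_W(H)=\sum_j \cma(W)_j\,H^{m-j}$ gives $q_W(-1)=\sum_j (-1)^{m-j}\cma(W)_j$, hence $(-1)^m q_W(-1)=\sum_{j=0}^{\dim W}(-1)^j \cma(W)_j$, which is the claimed value.

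To make the middle step rigorous rather than heuristic, I would invoke the complementary formula for CSM classes of joins (\cite[Theorem~3.13]{MR2782886}), as the paragraph following Proposition~\ref{prop:conform} already signals: write $V$ as the join of $W\subseteq\Pbb^m$ with the empty-set-over-$\Lambda$ (i.e.\ $\Lambda$ itself), express $\csm(V)$ in terms of $\csm(W)$ and $[\Lambda]$, and then use MacPherson's transformation together with the relation $\one_V = \one_{V\setminus\Lambda}+\one_\Lambda$ and the conic structure to pin down the constructible function $\Eu_V$ on $\Lambda$. Concretely, $\Eu_V$ restricted to $V\setminus\Lambda$ is pulled back from $\Eu_W$ via the projection, and $\Eu_V|_\Lambda$ is the constant $\Eu_V(p)$; applying $\csm$ and matching the top-dimensional term supported on $\Lambda$ against the CSM join formula forces $\Eu_V(p)$ to equal the alternating sum of the components of $\csm$ of $W$ that record $\Eu_W$ — which is precisely $\sum_j(-1)^j\cma(W)_j$.

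The main obstacle I anticipate is bookkeeping the difference between $\csm$ and $\cma$ (i.e.\ between $\one_V$ and $\Eu_V$) cleanly enough that the join formula for CSM classes can be converted into the statement about Chern-Mather classes without circularity. The two are related by MacPherson's natural transformation applied to different constructible functions, and the conic structure is what makes $\Eu_V$ constant along $\Lambda$; I would need to verify carefully that the contribution of $\Lambda$ to $\csm(V)$, after stripping the known contribution of $W$ propagated along the cone, is $\Eu_V(p)\cdot\csm(\Lambda)$ plus lower-order terms, so that reading off the $[\Pbb^{n-m-1}]$-coefficient gives the identity. Once that normalization is fixed, the rest is the elementary manipulation of $q_W(-1)$ indicated above, which matches the $(-1)^m q_W(-1)$ appearing naturally in \eqref{eqn:cone}.
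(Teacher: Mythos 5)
Your proposal is correct and follows essentially the same route as the paper: decompose $\Eu_V = \pi^*\Eu_W + \Eu_V(p)\,\one_\Lambda$, apply $c_*$ using the CSM join formula of \cite[Theorem~3.13]{MR2782886} to identify $c_*(\pi^*\Eu_W)=(1+H)^{n-m}q_W(H)$ (this is exactly Lemma~\ref{lem:eulerpb}), compare with \eqref{eqn:cone} to extract $\Eu_V(p)=(-1)^m q_W(-1)$, and evaluate. The ``bookkeeping'' worry in your last paragraph is resolved precisely as you suggest, by proving the pullback formula for arbitrary constructible functions (reducing by linearity to $\one_W$) and then specializing to $\Eu_W$.
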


Again, it is not necessary to require $\Lambda$ to be contained in a
complementary subspace; see Proposition~\ref{prop:Piene}. 
Proposition~\ref{prop:euler} is a consequence of
Proposition~\ref{prop:conform}, by means of a lemma which seems worth
stating explicitly. In the situation described above, let
$\pi:\Pbb^n\smallsetminus \Lambda \to \Pbb^m$ be the projection, and
let $\varphi$ be a constructible function defined on $\Pbb^m$.  Let
$\pi^*\varphi$ be the constructible function on $\Pbb^n$ defined by
\[
\pi^*\varphi(p) =
\begin{cases}
0\quad &\text{if $p\in \Lambda$} \\
\varphi(\pi(p)) \quad &\text{if $p\not\in \Lambda$}
\end{cases}
\]
Also, let $q_\varphi$, resp., $q_{\pi^*\varphi}$ be the polynomials of
degrees $\le m$, resp., $\le n$ such that
\[
c_*(\varphi)=q_\varphi(H)\cap [\Pbb^m]\quad,\quad
c_*(\pi^*\varphi)=q_{\pi^*\varphi}(H)\cap [\Pbb^n]\quad.
\]
Here $c_*$ is MacPherson's natural transformation \cite{MR0361141}.
For example $c_*(\one_W)=\csm(W)$ and $c_*(\Eu_W)=\cma(W)$
(cf.~\S\ref{sec:mainpf}).

\begin{lemma}\label{lem:eulerpb}
With the above notation,
\begin{equation}\label{eqn:csmcone}
q_{\pi^*\varphi}(H) = (1+H)^{n-m} q_\varphi(H)\quad.
\end{equation}
\end{lemma}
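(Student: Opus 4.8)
The plan is to resolve the rational projection $\pi$ by blowing up its centre $\Lambda$, transfer the statement to the resulting projective bundle, apply there the Verdier--Riemann--Roch property of MacPherson's transformation, and conclude with a Chow-ring computation in which the contributions supported on $\Lambda$ cancel. First I would set up the blow-up $\beta\colon\widetilde{\Pbb}\to\Pbb^n$ of $\Pbb^n$ along $\Lambda$. Choosing coordinates so that $\Pbb^m=\{x_{m+1}=\cdots=x_n=0\}$ and $\Lambda=\{x_0=\cdots=x_m=0\}$, one realizes $\widetilde{\Pbb}$ as the incidence variety $\{([x],[y])\in\Pbb^n\times\Pbb^m:(x_0:\cdots:x_m)=[y]\}$, with $\beta$ the first projection; the second projection $\widetilde\pi\colon\widetilde{\Pbb}\to\Pbb^m$ exhibits $\widetilde{\Pbb}$ as the bundle $\Pbb(\mathcal{E})$ with $\mathcal{E}=\cO_{\Pbb^m}(1)\oplus\cO_{\Pbb^m}^{\oplus(n-m)}$, in such a way that $\beta^*\cO_{\Pbb^n}(1)$ is the relative $\cO(1)$. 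Setting $\zeta=\beta^*H$ and $h=\widetilde\pi^*H_m$ (with $H_m$ the hyperplane class of $\Pbb^m$), one has $h^{m+1}=0$, $\zeta^{n-m}(\zeta-h)=0$, and $c(T_{\widetilde\pi})=(1+\zeta-h)(1+\zeta)^{n-m}$ for the relative tangent bundle. The exceptional divisor is $E\cong\Lambda\times\Pbb^m$: over $p\in\Lambda$ the fibre $\beta^{-1}(p)=\{p\}\times\Pbb^m$ is carried isomorphically onto $\Pbb^m$ by $\widetilde\pi$, while for $0\le k\le m$ the map $\beta$ restricts to a birational morphism $\widetilde\pi^{-1}(\Pbb^{k})\to\Lambda\ast\Pbb^{k}\cong\Pbb^{\,k+n-m}$. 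Consequently $\beta_*(h^{b}\cap[\widetilde{\Pbb}])=H^{b}\cap[\Pbb^n]$ for $0\le b\le m$ and $h^{b}\cap[\widetilde{\Pbb}]=0$ for $b>m$, so by the projection formula $\beta_*(\zeta^{a}h^{b}\cap[\widetilde{\Pbb}])=H^{a+b}\cap[\Pbb^n]$ for all $a\ge0$ and $0\le b\le m$.

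Next I would relate constructible functions. For $\varphi$ on $\Pbb^m$, the ordinary pullback $\widetilde\pi^*\varphi=\varphi\circ\widetilde\pi$ is defined on all of $\widetilde{\Pbb}$, and computing $\beta_*(\widetilde\pi^*\varphi)$ pointwise using the fibre description of $\beta$ above yields
\[
\beta_*(\widetilde\pi^*\varphi)=\pi^*\varphi+\chi(\varphi)\,\one_\Lambda,\qquad \chi(\varphi):=\chi(\Pbb^m,\varphi)=\textstyle\int_{\Pbb^m}c_*(\varphi).
\]
Applying $c_*$, using covariance for the proper map $\beta$, the identity $c_*(\widetilde\pi^*\varphi)=c(T_{\widetilde\pi})\cap\widetilde\pi^*c_*(\varphi)$ (the Verdier--Riemann--Roch property of $c_*$ for the smooth morphism $\widetilde\pi$), and $c_*(\one_\Lambda)=\csm(\Lambda)=(1+H)^{n-m}H^{m+1}\cap[\Pbb^n]$ (the push-forward of $\csm(\Pbb^{n-m-1})$), I obtain
\[
c_*(\pi^*\varphi)=\beta_*\!\bigl(c(T_{\widetilde\pi})\cap\widetilde\pi^*c_*(\varphi)\bigr)-\chi(\varphi)\,\csm(\Lambda).
\]
Since $\widetilde\pi$ is flat, $\widetilde\pi^*c_*(\varphi)=\widetilde\pi^*\bigl(q_\varphi(H_m)\cap[\Pbb^m]\bigr)=q_\varphi(h)\cap[\widetilde{\Pbb}]$, so the class to be pushed forward is $(1+\zeta-h)(1+\zeta)^{n-m}q_\varphi(h)\cap[\widetilde{\Pbb}]$.

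Finally I would carry out the push-forward. Writing $(1+\zeta-h)(1+\zeta)^{n-m}q_\varphi(h)=(1+\zeta)^{n-m+1}q_\varphi(h)-h(1+\zeta)^{n-m}q_\varphi(h)$, the only monomials of $h$-degree exceeding $m$ constitute the single term $-\chi(\varphi)\,h^{m+1}(1+\zeta)^{n-m}$, because the coefficient of $h^m$ in $q_\varphi$ (normalized to have degree $\le m$) equals $\int_{\Pbb^m}c_*(\varphi)=\chi(\varphi)$; as $h^{m+1}\cap[\widetilde{\Pbb}]=0$, this term is annihilated by $\beta_*$, and on the remaining part---where every monomial has $h$-degree $\le m$---the push-forward $\beta_*$ is effected by the substitution $\zeta=h=H$ established above. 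This gives
\[
\beta_*\!\bigl(c(T_{\widetilde\pi})\cap\widetilde\pi^*c_*(\varphi)\bigr)=\bigl((1+H)^{n-m}q_\varphi(H)+\chi(\varphi)(1+H)^{n-m}H^{m+1}\bigr)\cap[\Pbb^n],
\]
and the $\chi(\varphi)$-term cancels exactly against $\chi(\varphi)\,\csm(\Lambda)$, leaving $c_*(\pi^*\varphi)=(1+H)^{n-m}q_\varphi(H)\cap[\Pbb^n]$, which is \eqref{eqn:csmcone}. (By linearity one may instead first reduce to $\varphi=\one_Z$ for $Z\subseteq\Pbb^m$ a subvariety, where $\pi^*\one_Z=\one_{V}-\one_\Lambda$ with $V$ the cone over $Z$; the statement then becomes the cone case of the join formula for Chern--Schwartz--MacPherson classes of \cite[Theorem~3.13]{MR2782886}.) The only non-formal input is the Verdier--Riemann--Roch identity for $c_*$ along the smooth morphism $\widetilde\pi$---equivalently, the fact that the CSM class of a projective bundle is the relative Chern class capped with the pullback of the CSM class of the base---which I would invoke as a standard property of MacPherson's transformation. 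The one delicate point in the calculation is the interaction of the relation $h^{m+1}=0$ with $\beta_*$: the ``substitute $H$'' shortcut is valid only once the $h^{m+1}$-term has been discarded, and it is precisely this discarded term that supplies the cancellation of the boundary contribution $\chi(\varphi)\,\csm(\Lambda)$.
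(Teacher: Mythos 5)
Your proof is correct, but it takes a genuinely different route from the paper. The paper's argument is a two-step deduction: reduce by linearity to $\varphi=\one_W$, so that $\pi^*\varphi=\one_V-\one_\Lambda$ with $V$ the cone over $W$, then read off the polynomial for $\csm(V)$ from the join formula of \cite[Theorem~3.13]{MR2782886} and subtract the polynomial for $\one_\Lambda$; the factorization $(1+H)^{n-m}q_\varphi(H)$ drops out of that algebra. You instead resolve $\pi$ by blowing up $\Lambda$, use functoriality of $c_*$ under the proper map $\beta$ together with the pointwise identity $\beta_*(\widetilde\pi^*\varphi)=\pi^*\varphi+\chi(\varphi)\,\one_\Lambda$, and then invoke the Verdier--Riemann--Roch property of $c_*$ along the smooth projective-bundle morphism $\widetilde\pi$; your Chow-ring bookkeeping (the relations $h^{m+1}=0$, $\zeta^{n-m}(\zeta-h)=0$, the push-forward rule $\beta_*(\zeta^ah^b\cap[\widetilde\Pbb])=H^{a+b}\cap[\Pbb^n]$ for $b\le m$, and the cancellation of the $\chi(\varphi)\csm(\Lambda)$ term against the discarded $h^{m+1}$ contribution) is accurate throughout. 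This is essentially the ``alternative argument'' that the paper only gestures at in the remark following the lemma (``MacPherson's natural transformation preserves products\dots''), carried out with the advertised due care. What each approach buys: the paper's proof is shorter and leans entirely on the join formula as a black box; yours is independent of that formula, isolates VRR for smooth morphisms as the single non-formal input, explains geometrically where the boundary correction supported on $\Lambda$ comes from, and would adapt to more general bundle or projection situations. One stylistic point: since the lemma is stated as an identity of polynomials of degree $\le n$ rather than of classes, you should note (as you implicitly do) that $(1+H)^{n-m}q_\varphi(H)$ already has degree $\le n$, so the identity of classes in $A_*\Pbb^n$ determines the polynomial identity.
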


\begin{proof}
By linearity we may assume that $\varphi=\one_W$ for a subvariety $W$
of $\Pbb^m$; and therefore $\pi^*\varphi=\one_V-\one_\Lambda$, where
$V$ is the cone over $W$ as above. By \cite[Theorem~3.13]{MR2782886},
the polynomial corresponding to $\csm(V)=c_*(\one_V)$ equals
\[
((q_\varphi(H)+H^{m+1})(1+H)^{n-m})(1+H)^{n-m}-H^{n+1}\quad.
\]
It follows that $q_{\pi^*\varphi}$ is obtained from this polynomial by
subtracting the polynomial for~$\one_\Lambda$, i.e.,
$H^{m+1}((1+H)^{n-m}-H^{n-m})$, and this yields the stated formula.
\end{proof}

A good interpretation (and, with due care, an alternative argument)
for identity~\eqref{eqn:csmcone} is that MacPherson's natural
transformation preserves products (\cite{MR1158750},
\cite{MR2209219}), and the factors $1+H$ may be viewed as the $\csm$
class of a linearly embedded affine line.

\begin{proof}[Proof of Proposition~\ref{prop:euler}]
Recall that $\cma(V)=c_*(\Eu_V)$ (\S\ref{sec:mainpf}). The cone $V$ is
the union of the vertex $\Lambda$ and the inverse image
$\pi^{-1}(W)$. The restriction of $\Eu_V$ to $\pi^{-1}(W)$ equals
$\pi^*\Eu_W$; this follows for example from \cite[p.~426,
  3.]{MR0361141}. The restriction of $\Eu_V$ to $\Lambda$ is constant,
so it equals $\Eu_V(p)$. Thus
\[
\Eu_V = \pi^*\Eu_W + \Eu_V(p)\, \one_\Lambda\quad,
\]
and therefore
\begin{equation}\label{eqn:cmaV}
\cma(V) = c_*(\pi^*\Eu_W)+\Eu_V(p)\, c_*(\one_\Lambda)\quad.
\end{equation}
Now $\Lambda\cong\Pbb^{n-m-1}\subseteq\Pbb^n$, therefore
\[
c_*(\one_\Lambda) = c(T\Pbb^{n-m-1})\cap [\Lambda] =
\left((1+H)^{n-m}-H^{n-m}\right) H^{m+1}\cap [\Pbb^n]\quad.
\]
On the other hand, by Lemma~\ref{lem:eulerpb},
\[
c_*(\pi^*\Eu_W)=(1+H)^{n-m} q_W(H)\quad.
\]
Therefore, identity~\eqref{eqn:cmaV} states that
\[
q_V(H)=(1+H)^{n-m} q_W(H) + \Eu_V(p)\,H^{m+1}
\left((1+H)^{n-m}-H^{n-m}\right) \quad.
\]
Comparing this identity with~\eqref{eqn:cone} proves that
$\Eu_V(p)=(-1)^m q_W(-1)$. Finally, $q_W(H)=\sum_j \cma(W)_j H^{m-j}$,
hence
\[
(-1)^m q_W(-1)=(-1)^m \sum_{j\ge 0} \cma(W)_j (-1)^{m-j}=\sum_{j\ge 0} (-1)^j \cma(W)_j
\]
as needed.
\end{proof}

\begin{example}\label{exa:MP2}
The local Euler obstruction at the vertex of a cone in $\Pbb^n$ over a
nonsingular curve of degree $d$ and genus $g$ in $\Pbb^{n-1}$ equals
$2-2g-d$. Indeed, the Chern-Mather class of a nonsingular curve equals
the Chern class of its tangent bundle, so it pushes forward to
$d[\Pbb^1] + (2-2g) [\Pbb^0]$, and the formula follows from
Proposition~\ref{prop:euler}.

If the curve is a plane curve, then $2-2g=3d-d^2$, so the local Euler
obstruction at the vertex of a cone in $\Pbb^3$ over a nonsingular
plane curve of degree~$d$ equals $2d-d^2$, in agreement
with~\cite[p.~426, 2.]{MR0361141}.  \qede\end{example}

It is natural to ask whether the hypothesis that the vertex $\Lambda$
be in a complementary subspace may be relaxed. In the situation
presented at the beginning of this subsection, the codimension of $W$
is bounded below by $\dim\Lambda+2$. It turns out that this is the
only requirement needed for the results, provided
that $\Lambda$ is general.

\begin{prop}\label{prop:Piene}
Let $W\subseteq \Pbb^n$ be a closed subvariety of codimension $\ge
r\ge 2$, and let $\Lambda \cong\Pbb^{r-2}\subseteq \Pbb^n$ be a
general subspace. Let $V$ be the cone over $W$ with vertex
$\Lambda$. Also, let $q_W$, resp., $q_V$ be polynomials of degree less
than $n-r-1$, resp., $n$ such that
\[
\cma(W)=H^{r-1}\,q_W(H)\cap [\Pbb^n]\quad,\quad \cma(V)=q_V(H)\cap
    [\Pbb^n]\quad.
\]
Then
\[
q_V=\cJ_n((-H)^{r-1}\cdot \cJ_{n-r+1} (q_W))
\]
and $\Eu_V(p)=\sum_{j=0}^{\dim W} (-1)^j \cma(W)_j$ for all $p\in\Lambda$.
\end{prop}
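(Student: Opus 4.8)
The plan is to reduce Proposition~\ref{prop:Piene} to the complementary-subspace case already established in Propositions~\ref{prop:conform} and~\ref{prop:euler}. First I would fix a linear subspace $\Pbb^m\subseteq\Pbb^n$ with $m=n-r+1$ complementary to $\Lambda\cong\Pbb^{r-2}$, let $\pi\colon\Pbb^n\smallsetminus\Lambda\to\Pbb^m$ be the linear projection from $\Lambda$, and set $W':=\overline{\pi(W)}\subseteq\Pbb^m$. Since $\codim W\ge r=\dim\Lambda+2$, a general $\Lambda$ is disjoint from $W$ (because $\dim W+\dim\Lambda\le(n-r)+(r-2)<n$), so $\pi$ restricts to a morphism on $W$; realizing $\pi$ as a composition of $r-1$ projections from general points---each with center off the successive images of $W$, again by the codimension hypothesis---and invoking the fact used in \S\ref{ss:Pluecker} (cf.~\cite[Corollaire, p.~20]{MR1074588}) that a general projection preserves the Chern-Mather class, I would conclude $\cma(W')=\pi_*\cma(W)$. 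In particular $\dim W'=\dim W$ and $\cma(W')_j=\cma(W)_j$ for all $j$, so the polynomial $q_{W'}$ defined by $\cma(W')=q_{W'}(h)\cap[\Pbb^m]$ (with $h$ the hyperplane class of $\Pbb^m$) equals $q_W$.

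The second step is to observe that $V$ is \emph{also} the cone over $W'$ with vertex $\Lambda$: for $x\in W\smallsetminus\Lambda$ the point $\pi(x)$ lies on the $(r-1)$-plane $\langle x,\Lambda\rangle$, hence $\langle\pi(x),\Lambda\rangle=\langle x,\Lambda\rangle$, and taking the union over $x\in W$ identifies $V$ with $\overline{\bigcup_{y\in W'}\langle y,\Lambda\rangle}$. Since $\dim W'=\dim W\le n-r=m-1$, $W'$ is a proper subvariety of $\Pbb^m$, and $\Lambda\cong\Pbb^{n-m-1}$ is complementary to $\Pbb^m$, so Propositions~\ref{prop:conform} and~\ref{prop:euler} apply to this cone. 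The first gives $q_V=\cJ_n\bigl((-H)^{n-m}\cdot\cJ_m(q_{W'})\bigr)=\cJ_n\bigl((-H)^{r-1}\cdot\cJ_{n-r+1}(q_W)\bigr)$, and the second gives $\Eu_V(p)=\sum_{j=0}^{\dim W'}(-1)^j\cma(W')_j=\sum_{j=0}^{\dim W}(-1)^j\cma(W)_j$ for every $p\in\Lambda$, using once more $\dim W'=\dim W$ and $\cma(W')_j=\cma(W)_j$.

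The step I expect to be the main obstacle is the reduction itself: one must check that for a general $\Lambda$ the projection $\pi|_W$ is tame enough that the Chern-Mather class \emph{and each of its degrees} are genuinely unchanged (a general projection may well create new singularities, but their local Euler obstructions compensate, which is exactly the content of Piene's invariance result), and that no dimension collapse occurs. This is precisely where the hypothesis $\codim W\ge r$ enters in an essential way---it guarantees that a general $\Lambda$ misses $W$ and that the successive point-projections all have centers outside the relevant varieties, so that Piene's result may be applied at each stage. Everything after the reduction is purely formal, being a direct appeal to Propositions~\ref{prop:conform} and~\ref{prop:euler} together with the identification of $V$ as the cone over $W'$.
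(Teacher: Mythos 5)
Your proposal is correct and follows essentially the same route as the paper: project from the general vertex $\Lambda$ onto a complementary $\Pbb^{n-r+1}$, invoke Piene's result that general projections preserve the Chern--Mather class (so $q_{W'}=q_W$), note that $V$ is unchanged as the cone over the image, and then apply Propositions~\ref{prop:conform} and~\ref{prop:euler}. The extra details you supply (disjointness of $\Lambda$ from $W$ by the codimension hypothesis, factoring the projection into point projections) are consistent elaborations of the paper's argument rather than a different approach.
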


\begin{proof}
The Chern-Mather class is preserved under general projections
(\cite[Corollaire, p.~20]{MR1074588}). Therefore,
$\cma(W)=\cma(\pi(W))$ as classes in~$\Pbb^n$, where $\pi:
\Pbb^n\smallsetminus \Lambda \to \Pbb^{n-\dim\Lambda-1}$ is the
projection ($W\cap \Lambda=0$ by dimension considerations, since
$\Lambda$ is general). Since the cone over $W$ with vertex $\Lambda$
equals the cone over $\pi(W)$ with vertex~$\Lambda$, we may then
replace $W$ with~$\pi(W)$. This reduces the general statement to the
case in which $\Lambda$ is a complementary subspace, so the stated
formulas follow from Propositions~\ref{prop:conform}
and~\ref{prop:euler}.
\end{proof}

\begin{example}
Let $C$ be a twisted cubic in $\Pbb^3$, $p$ a general point of
$\Pbb^3$, and let $V$ be the cone over $C$ with vertex at $p$. Then
Proposition~\ref{prop:Piene} gives $ \cma(V) =
3[\Pbb^2]+5[\Pbb^1]+[\Pbb^0] $ (which is confirmed by an explicit
computation performed with the algorithm in~\cite{harrisFMP}) and
$\Eu_V(p)=-1$.  
\qede\end{example}

\end{document}